\documentclass[12pt]{amsart}
\usepackage{color}
\usepackage{dsfont}
\usepackage{url}
\usepackage{hyperref}
\usepackage{mathtools}
\mathtoolsset{showonlyrefs}
\usepackage{anysize}
\usepackage{geometry}
\marginsize{2cm}{2cm}{2cm}{2cm}
\usepackage[normalem]{ulem}
\setcounter{tocdepth}{5}
\theoremstyle{plain}
\numberwithin{equation}{section}
\newtheorem{thm}{Theorem}[section]
\newtheorem{cor}[thm]{Corollary}
\newtheorem{prop}[thm]{Proposition}
\newtheorem{lem}[thm]{Lemma}
\newtheorem{rem}[thm]{Remark}
\newtheorem{hyp}[thm]{Hypothesis}
\newcommand{\PP}[1]{\left(#1\right)}

\newcommand{\LL}[1]{\left\{#1\right\}}
\newcommand{\E}[1]{\mathbb{E}\left(#1\right)}
\newcommand{\abs}[1]{\left|#1\right|}
\renewcommand{\Pr}[1]{\mathbb{P}\left(#1\right)}

\newcommand{\Ind}[1]{\mathds{1}_{\left\{#1\right\}}}
\newcommand{\R}{\mathbb{R}}

\newcommand{\N}{\mathbb{N}}
\newcommand{\fa}{\mathfrak{a}}
\newcommand{\fs}{\mathfrak{s}}
\newcommand{\fb}{\mathfrak{b}}

\newcommand{\cA}{\mathcal{A}}
\newcommand{\cF}{\mathcal{F}}
\newcommand{\cD}{\mathcal{D}}
\newcommand{\NN}{\mathcal{N}}

\newcommand{\la}{\lambda}
\newcommand{\om}{\omega} 
\title[Limiting distribution of the condition number for random circulant matrices]{The asymptotic distribution of the condition number for random circulant matrices}
\author{Gerardo Barrera}
\address{University of Helsinki, Department of Mathematics and Statistics.
PL 68, Pietari Kalmin katu 5, 00014. Helsinki, Finland}
\email{gerardo.barreravargas@helsinki.fi}
\author{Paulo Manrique-Mir\'on}
\address{
National Polytechnic Institute.
Postal code: 07738. Mexico city, Mexico.}
\email{pmanriquem@ipn.mx}
\subjclass[2000]{Primary 60F05, 60G70, 65F35; Secondary  60G10, 62E20, 65F22}
\keywords{Circulant random matrices; Condition number; 
Fr\'echet distribution; Gumbel distribution; Lyapunov integrability condition; Rayleigh distribution}
\begin{document}
\begin{abstract}
In this manuscript, we study the limiting distribution  for the joint law of the largest and the smallest singular values 
for random circulant matrices
with generating sequence given by independent and identically distributed random elements satisfying the so-called
Lyapunov condition. 
Under  an appropriated normalization, the joint law of the extremal singular values
converges in distribution, as the matrix dimension tends to infinity, to an independent product of Rayleigh and Gumbel laws.
The latter implies that a normalized \textit{condition number} 
 converges in distribution to a Fr\'echet law as the dimension of the matrix increases.
\end{abstract}
\maketitle
\section{\textbf{Introduction}}
\markboth{{The asymptotic joint distribution of the largest and smallest singular values for random circulant matrices}}{{Limiting distribution of the condition number for random circulant matrices}}
\subsection{\textbf{Singular values and condition number}} 
The condition number was independently introduced by von Neumann and Goldstine in \cite{vonNG2,vonNG1}, and by Turing  in \cite{Turing}
 for studying the accuracy in the solution of a linear system in the presence of finite-precision arithmetic. 
Roughly speaking, the condition number measures how much the output value of a linear system can change by a small perturbation in the input argument, see \cite{Smale,Wozniakowski} for further details.

Let $\cA\in \mathbb{C}^{n\times m}$ be a matrix of dimension $n\times m$.
We denote the singular values of $\cA$ in non-decreasing order by $0\leq \sigma^{(n,m)}_{1}(\cA)\leq \cdots \leq \sigma^{(n,m)}_{n}(\cA)$. 
That is to say, they are the {square roots of the } eigenvalues of the $n$-square matrix $\cA\cA^*$, where $\cA^*$ denotes the conjugate transpose matrix of $\cA$.
 The condition number of $\cA$, $\kappa(\cA)$, is defined as
\begin{equation}\label{eq:cnA}
\kappa(\cA):=\frac{\sigma^{(n,m)}_{n}(\cA)}{ \sigma^{(n,m)}_{1}(\cA)}\quad \textrm{ whenever }\quad \sigma^{(n,m)}_{1}(\cA)>0.
\end{equation}
It is known that  
\[
\sigma^{(n,m)}_{1}(\cA)=\inf\{\|\mathcal{A}-\mathcal{B}\| : \mathcal{B}\in \mathfrak{S}^{(n,m)}\},
\]
where
\[
\mathfrak{S}^{(n,m)}:=\{\mathcal{B}\in \mathbb{C}^{n\times m}: \mathrm{rank}(\mathcal{B})<
\min\{n,m\}
\}
\]
and
$\|\cdot \|$ denotes the operator norm. For $n=m$, the smallest singular value
$\sigma^{(n,n)}_{1}(\cA)$ measures the distance between the
matrix $\cA$ and the set of  singular matrices. We refer to Chapter 1 in \cite{burgisser2013condition} for further details.

{The extremal singular values 
are ubiquitous in the applications and have produced a vast literature
in geometric functional analysis, mathematical physics, numerical linear algebra, time series, statistics, etc., see for instance 
\cite{arup2018,MikoshXie2016,edelmanphd,Rider2014,
Rudelson2010,TaoVu2010} and Chapter~5 in \cite{Vershynin2012}.}
{
The study of the largest and the smallest singular values has been very important in the study of sample correlation matrices, we refer to  \cite{HeinyMikosch2018,Heiny2017,
Heiny2021,Heiny2019} for further details.
Moreover, Poisson statistics for the largest eigenvalues in random matrices ensembles (Wigner ensembles for instance) are studied in \cite{Soshnikov2004,Soshnikov2006}.
}

Calculate or even estimate  the condition number of a generic matrix is a difficult task, see \cite{Sankar}.
{
In computational complexity theory, it is of interest to analyze the random condition number, that is, when the matrix $\mathcal{A}$ given in~\eqref{eq:cnA} is a random matrix.}
In \cite{edelman1988}, it is computed the limiting law of the condition number of a random rectangular matrix with independent and identically distributed (i.i.d. for short) standard Gaussian entries. Moreover,  the exact law of the condition number of a $2\times n$ matrix is  derived. In \cite{Wschebor2004},  for real square random matrices with i.i.d. standard Gaussian entries, no asymptotic lower and upper bounds for the tail probability of the condition number are established. {Later}, in \cite{Sutton2005}, the results are generalized for non-square matrices and analytic expressions  for the tail distribution of the condition number are obtained. 
Lower and upper bounds for the condition number (in $p$-norms) and the so-called average ``loss of precision" are studied in \cite{Szarek} for real and complex square random matrices with i.i.d. standard Gaussian entries.
In \cite{VisTre},  it is studied the case 
of random lower triangular matrices $\mathcal{L}_n$ of dimension $n$ with i.i.d. standard Gaussian random entries and shown that $(\kappa(\mathcal{L}_n))^{1/n}$ converges almost surely to $2$ as 
$n$ tends to infinity.
In \cite{Castillo}, using a Coulomb fluid technique,
it is derived asymptotics for the cumulative distribution function of the condition number for rectangular matrices with i.i.d. standard Gaussian entries.
More recently, 
distributional properties of the condition number for random matrices with i.i.d. Gaussian entries are established in \cite{AndersonWells,ChenDongarra,Shakil}
 {and large deviation asymptotics for condition numbers of sub-Gaussian distributions are given in \cite{Singull2021}.}
We recommend \cite{burgisser2013condition,cucker2016probabilistic,DemmelJ,edelmanphd} for a complete and {current} descriptions of condition number for random matrices.
\subsection{\textbf{Random circulant matrices}}
 {The (random) circulant matrices and (random) circulant type matrices are an important object in different areas of pure and applied mathematics, 
for instance 
compressed sensing,
cryptography,
discrete Fourier transform, {extreme value analysis,}
information processing,  machine learning, 
numerical analysis, {spectral analysis,}
time series analysis, etc. For more details we refer to \cite{aldrovandi,BoseGuha2011,BoseHazra2012,David2012,gray2006,
Simanca,RAU} and {the monograph on random circulant matrices \cite{arup2018}.}}
{
Some topics that have been studied are
spectral norms, extremal distributions, the so-called limiting spectral distribution for random circulant matrices and random circulant-type matrices and process convergence of fluctuations,
see
\cite{BoseHazra2011,BoseHazra22012,BoseHazra12011,
BoseHazra2010,BoseMaurya2020,BoseMitra2002,
BoseMitra20112,BoseSubhraSaha2009}.}

Let $\nu_0,\ldots,\nu_{n-1}\in \mathbb{C}$ be any given complex numbers.  We say that an $n\times n$ complex matrix $\textnormal{circ}(\nu_0,\ldots,\nu_{n-1})$ is circulant with generating elements $\{\nu_0,\ldots,\nu_{n-1}\}$ if it has the following structure{:}
\begin{equation*}\label{circulantes}
\textnormal{circ}(\nu_0,\ldots,\nu_{n-1}):=\left[\begin{array}{ccccc}
\nu_0 & \nu_{1} & \cdots & \nu_{n-2} & \nu_{n-1} \\
\nu_{n-1} & \nu_{0} & \cdots & \nu_{n-3} & \nu_{n-2} \\
\vdots & \vdots & \ddots & \vdots & \vdots \\
\nu_2 & \nu_3 & \cdots & \nu_0 & \nu_1\\
\nu_{1} & \nu_{2} & \cdots & \nu_{n-1} & \nu_{0}
\end{array}\right].
\end{equation*}
Let $\mathsf{i}$ be  the imaginary unit
and $\om_n=\exp(\mathsf{i}\cdot2\pi/n)$  be a primitive $n$-th root of unity. Define the Fourier unitary matrix of order $n$ by
$\cF_n:=\frac{1}{\sqrt{n}}(\om^{kj}_n)_{k,j\in \{0,\ldots,n-1\}}$.
It is well-known that $\textnormal{circ}(\nu_0,\ldots,\nu_{n-1})$ can be diagonalized  as follows:
\begin{equation}\label{eq:decomposition}
\textnormal{circ}(\nu_0,\ldots,\nu_{n-1})=\cF^*_n \cD_n \cF_n,
\end{equation}
where  $\cD_n:=\textnormal{diag}(\lambda^{(n)}_1,\ldots,\lambda^{(n)}_n)$ is a diagonal matrix with entries satisfying 
\begin{equation*}
\la^{(n)}_k=\sum_{j=0}^{n-1}\nu_j \,\om^{kj}_n\quad \textrm{ for }\quad  k\in \{0,\ldots,n-1\}.
\end{equation*}
By~\eqref{eq:decomposition} we note that
$(\la^{(n)}_k)_{k\in \{0,\ldots,n-1\}}$
are the eigenvalues of  $\textnormal{circ}(\nu_0,\ldots,\nu_{n-1})$.
For $\xi_0,\ldots,\xi_{n-1}$ being  random elements, we say that 
$\mathcal{C}_n:=\textnormal{circ}(\xi_0,\ldots,\xi_{n-1})$ is an $n\times n$ random circulant matrix. Then
its eigenvalues are given by 
\begin{equation}\label{eq:eigenvals}
\la^{(n)}_k=\sum\limits_{j=0}^{n-1}\xi_j\, \om^{kj}_n\quad \textrm{ for }\quad k\in \{0,\ldots,n-1\}.
\end{equation}
Since any circulant matrix is a normal matrix, its singular values are given by 
$(|\la^{(n)}_k|)_{k\in \{0,\ldots,n-1\}}$,
where the symbol $|\cdot|$ denotes the complex modulus.
Then the extremal singular values of $\mathcal{C}_n$ can be written as
\begin{equation}\label{eq:notation}
\sigma^{(n)}_{\min}:=\min_{k\in \{0,\ldots,n-1\}}\big|\la^{(n)}_k\big|
\quad \textrm{ and } \quad 
\sigma^{(n)}_{\max}:=\max_{k\in \{0,\ldots,n-1\}}\big|\la^{(n)}_k\big|.
\end{equation}
By~\eqref{eq:cnA} and~\eqref{eq:notation} it follows that the {random} condition number of $\mathcal{C}_n$ 
is given by 
\begin{equation}\label{eq:defk}
\kappa(\mathcal{C}_n)=\frac{\sigma^{(n)}_{\max}}{\sigma^{(n)}_{\min}} \quad \textrm{ whenever }\quad \sigma^{(n)}_{\min}>0.
\end{equation}
We stress that the random variables $\sigma^{(n)}_{\max}$ and $\sigma^{(n)}_{\min}$ are not independent.
Thus, it is a {\it challenging problem} to compute (or estimate) the distribution of condition numbers of
 circulant matrices for general  i.i.d. random coefficients $\xi_0,\ldots,\xi_{n-1}$.
\subsection{\textbf{Main results}}
The problem of computing the limiting distribution of the condition number for square matrices with
i.i.d. (real or complex) standard Gaussian random entries  is completely analyzed in Chapter 7 of \cite{edelmanphd}.
In this manuscript we focus on the computation of the limiting distribution of  $\kappa(\mathcal{C}_n)$  for $\xi_0,\ldots,\xi_{n-1}$ being i.i.d. real random variables satisfying the so-called Lyapunov condition,
see Hypothesis~\ref{hyp:lc} below.
In fact, the limiting distribution is a Fr\'echet distribution that belongs to the class of the so-called extreme value distributions \cite{GA}.
Non-asymptotic estimates for the condition number for random
circulant and Toeplitz matrices with i.i.d. standard Gaussian random entries
are given in \cite{pan2001structured,panSZ2015}. 
 The approach and results of
\cite{pan2001structured,panSZ2015}  
 are different in nature from our results given in  Theorem~\ref{thm:generalcase}.

We assume the following integrability condition. It appears for instance in the so-called Lyapunov Central Limit Theorem, see Section 7.3.1 in \cite{Ash}.
{
Along this manuscript, the set of non-negative integers is denoted by $\mathbb{N}_0$.}
\begin{hyp}[Lyapunov condition]\label{hyp:lc}
We assume that $(\xi_j)_{j\in \mathbb{N}_0}$ is a sequence
of  i.i.d. non-degenerate real random variables on some probability space $(\Omega,\mathcal{F}, \mathbb{P})$
with zero mean and unit variance. If there exists $\delta>0$ such that
$
\mathbb{E}\left[|\xi_0|^{2+\delta}\right]<\infty$,
where $\mathbb{E}$ denotes the expectation with respect to $\mathbb{P}$, we say that $(\xi_j)_{j\in \mathbb{N}_0}$ satisfies the Lyapunov integrability condition.
\end{hyp}

{We note that a sequence of i.i.d. non-degenerate sub-Gaussian random variables satisfies the Lyapunov condition.}
Before state the main result and its consequences, we introduce some notation.
For shorthand and in a conscious abuse of notation, we use indistinctly the following notations for the exponential function:
$\exp(a)$ or $e^{a}$ for $a\in \mathbb{R}$.
We denote by
$\ln(\cdot)$ the Napierian logarithm function and we use the same notation $|\cdot|$ for the complex modulus and the absolute value.

The main result of this manuscript is the following.
\begin{thm}[Joint asymptotic distribution of the smallest and the largest singular values]\label{thm:generalcase}
Assume that Hypothesis~\ref{hyp:lc} is valid.
Then it follows that
\begin{equation}\label{eq:RGlimite}
\lim\limits_{n\to \infty}\Pr{\sigma^{(n)}_{\min}\leq  x,\frac{\sigma^{(n)}_{\max}-\fa_n}{\fb_n}\leq y}=R(x)G(y)
\quad \textrm{ for any } x\geq 0 \textrm{ and } y\in \mathbb{R},
\end{equation}
where 
\begin{equation}\label{eq:defRG}
R(x)=1-\exp\PP{-x^2/2},~x\geq 0,\quad \textrm{ and }\quad G(y)=\exp\PP{-e^{-y}},~ y\in \mathbb{R}
\end{equation}
are the  Rayleigh distribution and Gumbel distribution, respectively,
and the normalizing constants are given by
\begin{equation}\label{eq:defab}
\fa_n=\sqrt{n\ln(n/2)}\quad \textrm{ and } \quad \fb_n=\frac{1}{2}\sqrt{\frac{n}{\ln (n/2)}},\quad n\geq 3.
\end{equation}
\end{thm}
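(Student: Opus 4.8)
The plan is to recast the two extremes in terms of the number of exceptionally small and the number of exceptionally large eigenvalues, and to prove a bivariate Poisson limit for this pair, exploiting that the eigenvalues of $\mathcal{C}_n$ behave asymptotically like independent complex Gaussians.

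\emph{Reduction.} Since the $\xi_j$ are real, $\la^{(n)}_{n-k}=\overline{\la^{(n)}_k}$, hence $\abs{\la^{(n)}_{n-k}}=\abs{\la^{(n)}_k}$ and both extremes are attained over $k\in\{0,1,\dots,\lfloor n/2\rfloor\}$. The eigenvalues $\la^{(n)}_0=\sum_{j}\xi_j$ and, for even $n$, $\la^{(n)}_{n/2}=\sum_{j}(-1)^j\xi_j$ are real sums of i.i.d.\ variables; by the central limit theorem with the Berry--Esseen rate supplied by Hypothesis~\ref{hyp:lc} one has $\Pr{\abs{\la^{(n)}_0}\le x}\to0$ and $\Pr{\abs{\la^{(n)}_0}\ge\fa_n+\fb_n y}\to0$, and similarly for $\la^{(n)}_{n/2}$, so these indices are irrelevant to the joint limit. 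It therefore suffices to work over $M_n:=\{1,\dots,m_n\}$, $m_n:=\lfloor(n-1)/2\rfloor$; a short trigonometric identity gives, for every $k\in M_n$, that $\mathrm{Re}\,\la^{(n)}_k$ and $\mathrm{Im}\,\la^{(n)}_k$ each have variance $n/2$ and are uncorrelated, and, for distinct $k,k'\in M_n$, that the four coordinates $\mathrm{Re}\,\la^{(n)}_k,\mathrm{Im}\,\la^{(n)}_k,\mathrm{Re}\,\la^{(n)}_{k'},\mathrm{Im}\,\la^{(n)}_{k'}$ are pairwise uncorrelated.

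\emph{Bivariate Poisson approximation.} Fix $x\ge0$ and $y\in\R$ and set
\[
W_n:=\sum_{k\in M_n}\Ind{\abs{\la^{(n)}_k}\le x},\qquad V_n:=\sum_{k\in M_n}\Ind{\abs{\la^{(n)}_k}>\fa_n+\fb_n y}.
\]
I would show that $(W_n,V_n)$ converges in distribution to a pair of \emph{independent} Poisson random variables with means $x^2/2$ and $e^{-y}$. Granting this and the reduction,
\[
\Pr{\sigma^{(n)}_{\min}\le x,\ \tfrac{\sigma^{(n)}_{\max}-\fa_n}{\fb_n}\le y}=\Pr{V_n=0}-\Pr{W_n=0,\,V_n=0}+o(1)\ \longrightarrow\ e^{-e^{-y}}\PP{1-e^{-x^2/2}}=R(x)G(y).
\]
The Poisson limit comes from the method of factorial moments: for fixed $r,s$,
\[
\E{W_n(W_n-1)\cdots(W_n-r+1)\cdot V_n(V_n-1)\cdots(V_n-s+1)}=\sum_{\substack{k_1,\dots,k_r,l_1,\dots,l_s\in M_n\\ \text{distinct}}}\Pr{\,\bigcap_i\{\abs{\la^{(n)}_{k_i}}\le x\}\cap\bigcap_j\{\abs{\la^{(n)}_{l_j}}>\fa_n+\fb_n y\}\,},
\]
with the $k_i$ automatically disjoint from the $l_j$ once $x<\fa_n$. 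It then remains to prove two things: that each such joint probability factorizes, up to negligible relative error, into the product of its one-index marginals, and that uniformly in $k\in M_n$,
\[
\Pr{\abs{\la^{(n)}_k}\le x}=\tfrac{x^2}{n}\PP{1+o(1)},\qquad \Pr{\abs{\la^{(n)}_k}>\fa_n+\fb_n y}=\tfrac{2}{n}e^{-y}\PP{1+o(1)},
\]
the second identity being exactly where the form~\eqref{eq:defab} of $\fa_n,\fb_n$ enters, via $(\fa_n+\fb_n y)^2/n=\ln(n/2)+y+o(1)$. Then the factorial moments tend to $(x^2/2)^r(e^{-y})^s$, those of the limiting product law, and a uniform Poisson-type domination upgrades moment convergence to convergence in distribution.

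\emph{The two estimates, and the main obstacle.} For the ``small'' events, the factorization and the first marginal amount to a local limit theorem at the origin for the almost-orthogonal linear statistics $(\mathrm{Re}\,\la^{(n)}_{k_i},\mathrm{Im}\,\la^{(n)}_{k_i})_i$: a characteristic-function computation, using $\E{e^{\mathsf{i}t\xi_0}}=1-t^2/2+o(t^2)$ (valid under Hypothesis~\ref{hyp:lc}) and $\sum_j\PP{t_1\cos(2\pi kj/n)+t_2\sin(2\pi kj/n)}^2=\tfrac n2\abs{t}^2$, shows the relevant joint density near $0$ equals $(1+o(1))\prod_i\tfrac1{\pi n}$; if $\xi_0$ is lattice-valued one first discards the $n^{o(1)}$ degenerate indices $k$ (those with $n/\gcd(k,n)$ bounded), whose contribution to $W_n$ is $o(1)$ by a crude anti-concentration bound, and argues on the rest through the corresponding lattice local limit theorem. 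The hard part will be the ``large'' events: here an $o(1)$ additive error is useless and one needs the \emph{sharp relative} behaviour of a moderate-deviation probability --- $\fa_n$ sits at deviations of order $\sqrt{\ln n}$ standard deviations --- which a mere $(2+\delta)$-th moment does not directly provide. I would obtain it by truncating $\xi_j\mapsto\xi_j\Ind{\abs{\xi_j}\le c_n}$ with $c_n=o(\sqrt n/\sqrt{\ln n})$ and $n\,\Pr{\abs{\xi_0}>c_n}\to0$ (available by Hypothesis~\ref{hyp:lc}): with probability $\to1$ no truncation occurs, the recentering term is annihilated by $\sum_j\om_n^{kj}=0$ for $k\in M_n$, and the variance drift is $o(1/\ln n)$, so the truncation is harmless; for the bounded summands the tail of $\abs{\la^{(n)}_k}^2$ near $n\ln(n/2)$ is then handled by a sharp Cram\'er/Petrov-type moderate-deviation estimate, whose Cram\'er-series correction vanishes precisely because $c_n\sqrt{\ln n}=o(\sqrt n)$, and the joint tails factorize by the same truncation together with asymptotic independence across distinct indices. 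Assembling these pieces yields~\eqref{eq:RGlimite}. (As a sanity check: for standard Gaussian $\xi_0$ the $\la^{(n)}_k$, $k\in M_n$, are exactly i.i.d.\ complex Gaussian, and the statement collapses to the classical extreme-value asymptotics for i.i.d.\ Rayleigh-type variables together with the classical asymptotic independence of the sample minimum and maximum.)
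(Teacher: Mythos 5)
Your proposal is correct in outline but takes a genuinely different technical route than the paper, and it is worth spelling out the contrast. Both you and the authors begin by discarding the real eigenvalues $\lambda^{(n)}_0$ (and $\lambda^{(n)}_{n/2}$ for even $n$) and then reduce, via the conjugate symmetry $\lambda^{(n)}_{n-k}=\overline{\lambda^{(n)}_k}$, to roughly $n/2$ ``generic'' frequencies; both use a truncation $\xi_j\mapsto\xi_j\Ind{\abs{\xi_j}\le c_n}$ (the paper's Lemma~\ref{lem:bounded} with $c_n=n^{1/(2+\delta)}$) whose recentring is annihilated by $\sum_j\om^{kj}_n=0$; and both establish a Poisson-type limit for the numbers of exceedances, you by the method of factorial moments, the paper by Bonferroni's inequalities, which are essentially the same device. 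The real divergence is in the local estimate that drives everything: you propose to handle the ``small'' window by a multivariate local limit theorem at the origin and the ``large'' window by a sharp, relative-error Cram\'er--Petrov moderate-deviation estimate for weighted sums of bounded i.i.d.\ variables, with the joint factorisation over distinct frequencies argued ad hoc. The paper instead invokes a single black box, Lemma~3.4 of Davis--Mikosch (our Lemma~\ref{lem:NormalApprox}), which is a uniform local \emph{density} approximation for any fixed $2d$-dimensional marginal of the Fourier vector, built on Einmahl's multivariate extension of the Koml\'os--Major--Tusn\'ady strong approximation. That lemma needs a density to exist, which is why the paper inserts the Gaussian smoothing step (Subsection~\ref{sec:smooth}) --- precisely to sidestep the lattice issue you flag, rather than discarding degenerate frequencies --- and it delivers the small-ball and the moderate-deviation regime, as well as the multivariate factorisation, in one uniform statement on the region $\abs{x}^3=\textnormal{o}^{(d)}_n(\min\{n^c,n^{1/2-1/(2+\delta)}\})$, which contains $\sqrt{\ln n}$.

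So your approach is more elementary in spirit (no KMT coupling) at the cost of carrying three separate technical burdens the KMT lemma absorbs at once: the lattice corrections in the local CLT, the relative-error moderate-deviation asymptotics at scale $\sqrt{\ln n}$ under only a $(2+\delta)$-th moment (your truncation with $c_n\sqrt{\ln n}=o(\sqrt n)$ is the right idea, and is close in spirit to what the cited literature on periodogram maxima does, but this is the step that requires the most care), and the multivariate decoupling across frequencies, which you only assert. None of these is a fatal gap --- the programme can be carried out --- but collectively they are where the work lives, and the paper's route of smoothing plus Einmahl--KMT exists exactly to avoid proving them from scratch. One small simplification: for the reduction you do not need a Berry--Esseen rate to show $\Pr{\abs{\lambda^{(n)}_0}\ge\fa_n+\fb_n y}\to 0$; the ordinary CLT together with $(\fa_n+\fb_n y)/\sqrt{n}\to\infty$ suffices, which is exactly what the paper's Lemma~\ref{lem:DifZero070520291540} uses.
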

The proof of Theorem~\ref{thm:generalcase}
is based in the {Davis--Mikosch} method used to prove that a normalized  maximum of the periodogram converges in distribution to the Gumbel law, see
Theorem~2.1 in \cite{DavisMikosh}.
This method relies on Einmahl's multivariate extension of the so-called Koml\'os--Major--Tusn\'ady approximation, see \cite{Einmahl}.
However, the laws of the largest singular value $\sigma^{(n)}_{\max}$ and the smallest singular value $\sigma^{(n)}_{\min}$ are strongly correlated, and hence the computation of the condition number law is a priori hard. Thus, using the {Davis--Mikosch} method we compute the limiting law of 
the random vector
\begin{equation}\label{eq:jointvc}
\left(\sigma^{(n)}_{\min},\frac{\sigma^{(n)}_{\max}-\fa_n}{\fb_n}\right)
\end{equation}
whose components are not independent.
Applying the Continuous Mapping Theorem we deduce the limiting law of the condition number $\kappa(\mathcal{C}_n)$, see Corollary~\ref{cor:A} item (iv).
{Along the lines of~\cite{HeinyYslas2021}, one can obtain
convergence of the point processes of the singular values in the setting of Theorem~\ref{thm:generalcase}.}
We remark that~\eqref{eq:eigenvals} resembles the discrete Fourier transform and it is related with the so-called periodogram, which 
have been used in many areas of apply science.
The maximum of the periodogram has been already studied {for instance in \cite{DavisMikosh,Kokoszka2000,Lin2009,Turkman1984} } and under a suitable rescaling, the Gumbel distribution appears as a limiting law. The asymptotic behavior of Fourier transforms of stationary
ergodic sequences with finite second moments is analyzed and shown that asymptotically the real part and the imaginary part of the Fourier transform decouple in a product of independent Gaussian distributions, see \cite{Peligrad2010}.
{
The so-called quenched central limit theorem for the discrete Fourier transform of a stationary ergodic process is obtained in \cite{Barrera2016} and the central limit theorem for discrete Fourier transforms of function times series are given in  \cite{Cerovecki2017}. In addition, in \cite{Siegfried2020} the maximum of the periodogram is studied in times series with values in a Hilbert space.} 
However, up to our knowledge, 
Theorem~\ref{thm:generalcase} is not immediately implication of these results.
As a consequence of Theorem~\ref{thm:generalcase} we obtain the limiting distribution of the (normalized) largest singular value, the smallest singular value and the (normalized) condition number
 as Corollary~\ref{cor:A} states.
{
For a  $n\times n$ symmetric random Toeplitz matrix satisfying Hypothesis~\ref{hyp:lc}, it is shown in \cite{SenVirag} that the largest eigenvalue scaled by $\sqrt{2n\ln(n)}$ converges in $L^{2+\delta}$ as $n\to \infty$ to the constant $\|\textsf{Sin}\|^2_{2\to 4}=0.8288\ldots$.}

We point out that
Theorem~1 in \cite{BrycSethuraman} yields
 that the Gumbel distribution is the limiting distribution for  the 
(renormalized) largest singular value for symmetric circulant random matrices with generating i.i.d. sequence (half of its entries) satisfying Hypothesis~\ref{hyp:lc}.
Also, for suitable normalization the limiting law of the largest singular value of Hermitian Gaussian circulant matrices has Gumbel distribution, see Corollary~5 in \cite{Meckes}.

Recall that the square root of a 
Exponential distribution with parameter $\lambda$ has 
Rayleigh distribution with parameter $(2\lambda)^{-1/2}$. The exponential law  appears as the limiting distribution of the minimum modulus of  trigonometric polynomials, see Theorem~1 in \cite{Yakir2020} for the Gaussian case and  Theorem~1.2 of \cite{Cook2021} for the sub-Gaussian case. 

{
The Fr\'echet distribution appears as a limiting distribution of the largest eigenvalue (rescaled) for random real symmetric matrices with independent and heavy tailed entries, see
Corollary~1 in \cite{Aunger2009} {and \cite{Bojan2021} for the non-i.i.d. case.}}
The Fr\'echet distribution in 
Corollary~\ref{cor:A} item (iv) has cumulative distribution $F(t)=\exp(-t^{-2})
\Ind{t>0}$
with shape parameter $2$, scale parameter $1$ and location parameter $0$.
Moreover, it does not possess finite variance.
For descriptions of extreme value distributions and limiting theorems we refer to \cite{GA}.
\begin{cor}[Asymptotic distribution of the largest singular value, the smallest singular value and the condition number]\label{cor:A}
Let the notation and hypothesis of 
Theorem~\ref{thm:generalcase} be valid. The following holds.
\begin{itemize}
\item[(i)] The normalized maximum $\frac{\sigma^{(n)}_{\max}-\fa_n}{\fb_n}$ and the minimum
$\sigma^{(n)}_{\min}$ are asymptotically independent.
\item[(ii)] The normalized maximum $\frac{\sigma^{(n)}_{\max}-\fa_n}{\fb_n}$ converges in distribution as $n\to \infty$ to a Gumbel distribution, i.e.,
\[
\lim\limits_{n\to \infty}\Pr{\frac{\sigma^{(n)}_{\max}-\fa_n}{\fb_n}\leq y}=G(y)
\quad \textrm{ for any }\quad y\in \mathbb{R}.
\]
\item[(iii)] The minimum $\sigma^{(n)}_{\min}$ converges in distribution as $n\to \infty$ to a Rayleigh distribution, i.e.,
\[
\lim\limits_{n\to \infty}\Pr{\sigma^{(n)}_{\min}\leq  x}=R(x)
\quad \textrm{ for any }\quad x\geq 0.
\]
\item[(iv)]
The condition number $\kappa(\mathcal{C}_n)$  converges in distribution as $n\to \infty$ to a Fr\'echet distribution, i.e.,
\[
\lim\limits_{n\to \infty}
\Pr{\frac{\kappa(\mathcal{C}_n)}{\sqrt{\frac{1}{2}n\ln(n)}}\leq  z} =F(z)\quad \textrm{ for any }\quad z>0, 
\]
where 
$F(z)=\exp\left(-z^{-2}\right)\Ind{z>0}$.
\end{itemize}
\end{cor}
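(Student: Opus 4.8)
The plan is to deduce all four items of Corollary~\ref{cor:A} as soft consequences of Theorem~\ref{thm:generalcase}, using the continuous mapping theorem and Slutsky-type arguments; no new probabilistic estimate is needed.

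First I would restate Theorem~\ref{thm:generalcase} as: the random vector in~\eqref{eq:jointvc} converges in distribution, as $n\to\infty$, to a pair $(V,W)$ whose joint distribution function is $(x,y)\mapsto R(x)G(y)$. This is a genuine two-dimensional distribution function, being the product of the one-dimensional distribution functions $R$ and $G$, so the limiting law is exactly the product of the Rayleigh law and the Gumbel law; moreover $R$ and $G$ are continuous, so convergence holds at every point. Since a product law makes its coordinates independent with the stated marginals, this \emph{is} item~(i); applying the (continuous) coordinate projections $(s,w)\mapsto w$ and $(s,w)\mapsto s$ and the continuous mapping theorem then gives items~(ii) and~(iii). (Alternatively, item~(ii) follows by letting $x\to\infty$ in~\eqref{eq:RGlimite}, using monotonicity in $x$, $R(x)\uparrow1$, and item~(iii), to sandwich $\Pr{(\sigma^{(n)}_{\max}-\fa_n)/\fb_n\le y}$ between $R(x)G(y)+o(1)$ and $R(x)G(y)+\exp(-x^2/2)+o(1)$.)

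For item~(iv), on the event $\{\sigma^{(n)}_{\min}>0\}$ I would factor
\[
\frac{\kappa(\mathcal{C}_n)}{\sqrt{\tfrac12 n\ln(n)}}
=\frac{\sigma^{(n)}_{\max}}{\fa_n}\cdot\frac{\fa_n}{\sqrt{\tfrac12 n\ln(n)}}\cdot\frac{1}{\sigma^{(n)}_{\min}}
=:A_n\, c_n\,\frac{1}{\sigma^{(n)}_{\min}}.
\]
The factor $c_n=\sqrt{2\ln(n/2)/\ln(n)}$ is deterministic with $c_n\to\sqrt2$. Since $A_n=1+\tfrac{\fb_n}{\fa_n}\cdot\tfrac{\sigma^{(n)}_{\max}-\fa_n}{\fb_n}$, $\fb_n/\fa_n=1/(2\ln(n/2))\to0$, and $(\sigma^{(n)}_{\max}-\fa_n)/\fb_n$ is tight by item~(ii), we get $A_n\to1$ in probability. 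By item~(iii), $\sigma^{(n)}_{\min}\Rightarrow V\sim R$; since $R$ has no atom at $0$, the map $s\mapsto1/s$ is a.s.\ continuous at $V$, and $\Pr{\sigma^{(n)}_{\min}\le\varepsilon}\to R(\varepsilon)\downarrow0$ shows $\Pr{\sigma^{(n)}_{\min}=0}\to0$, so the displayed quantity is a well-defined random variable with probability tending to one. Combining these facts via Slutsky's theorem yields $\kappa(\mathcal{C}_n)/\sqrt{\tfrac12 n\ln(n)}\Rightarrow \sqrt2/V$, and for $z>0$,
\[
\Pr{\frac{\sqrt2}{V}\le z}=\Pr{V\ge\frac{\sqrt2}{z}}=1-R\!\PP{\frac{\sqrt2}{z}}=\exp\!\PP{-\frac12\cdot\frac{2}{z^2}}=\exp\!\PP{-z^{-2}}=F(z),
\]
which is the Fr\'echet law in item~(iv).

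I expect the only genuinely delicate point to be this passage to the ratio in item~(iv): the denominator $\sigma^{(n)}_{\min}$ may a priori be small, and the map sending~\eqref{eq:jointvc} to $\kappa(\mathcal{C}_n)/\sqrt{\tfrac12 n\ln(n)}$ depends on $n$. The cleanest way to handle both at once is to write $\kappa(\mathcal{C}_n)/\sqrt{\tfrac12 n\ln(n)}=h_n\big(\sigma^{(n)}_{\min},(\sigma^{(n)}_{\max}-\fa_n)/\fb_n\big)$ with $h_n(s,w)=c_n(1+\tfrac{\fb_n}{\fa_n}w)/s$, to observe that $h_n\to h$ with $h(s,w)=\sqrt2/s$ uniformly on compact subsets of $(0,\infty)\times\R$, and to invoke the extended continuous mapping theorem together with the tightness of~\eqref{eq:jointvc} and the fact that the limit $(V,W)$ is concentrated on $(0,\infty)\times\R$. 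Everything else is routine bookkeeping.
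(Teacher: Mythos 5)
Your proof is correct and follows essentially the same route as the paper: items (i)--(iii) are read off from the joint convergence in Theorem~\ref{thm:generalcase} via the continuous mapping theorem, and item~(iv) is obtained by isolating $\sqrt{2}/\sigma^{(n)}_{\min}$ through Slutsky's theorem and identifying $\sqrt{2}/\mathfrak{R}$ as a Fr\'echet variable (the paper does this via its Lemma~\ref{lem:1Ray}, you compute it directly). Your multiplicative factoring of $\kappa(\mathcal{C}_n)/\sqrt{\tfrac12 n\ln n}$ is algebraically identical to the paper's additive split in~\eqref{eq:split}, so the difference is purely presentational.
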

{
In the sequel, we briefly compare our results with the literature about the limiting law of the condition number, the smallest singular value and the largest singular value.}
{
\begin{rem}[Frechet's distributions as limiting distributions for condition numbers]
The Fr\'echet distribution with shape parameter $2$, scale parameter $2$ and location parameter $0$ is the limiting distribution as the dimension growths of $\kappa(\mathcal{A}_n)/n$, where $\mathcal{A}_n$ is a square matrix of dimension $n$ with i.i.d. complex Gaussian entries, see
Theorem~6.2 in \cite{edelman1988}. When $\mathcal{A}_n$ has real i.i.d. Gaussian entries, the limiting law of $\kappa(\mathcal{A}_n)/n$ converges   in distribution as $n$ tends to infinity to a random variable with an explicit density, see Theorem~6.1 in \cite{edelman1988}. We stress that such density does not belong to the Fr\'echet family of distributions.
We also point out that 
the distribution of the so-called Demmel condition for (real and complex) Wishart matrices are given explicitly in \cite{Edel1992}.
\end{rem}
}

{
\begin{rem}[A word about the smallest singular value $\sigma_1(\mathcal{A}_n)$]
The behavior of the smallest singular value appears naturaly in numerical inversion of large matrices. 
For instance, when the random matrix $\mathcal{A}_n$ has complex i.i.d. Gaussian entries, for all $n$ the random variable of $n\sigma_1(\mathcal{A}_n)$ has the Chi-square distribution with two degrees of freedom, see Corollary~3.3 in \cite{edelman1988}.
For  a random matrix $\mathcal{A}_n$ with real i.i.d. Gaussian entries, 
it is shown in Corollary~3.1 in \cite{edelman1988} that
$n\sigma_1(\mathcal{A}_n)$
converges in distribution as the dimension increases to a random variable with an explicit density.
For further discussion about the smallest singular values, we refer to
\cite{BaiYin993,BoseHachem2020,
Gregoratti2021,HeinyMikosch2018,
HuangTik2020,Kostlan,TaoVu2010,Tatarko}.
\end{rem}
 }

{
\begin{rem}[A word about the largest singular value $\sigma_n(\mathcal{A}_n)$]
A lot is known about the behavior of the largest singular value.
As an illustration, 
for  a random matrix $\mathcal{A}_n$ with real i.i.d. Gaussian entries, 
it is shown in Lemma~4.1 in \cite{edelman1988} that
$(1/n)\sigma_n(\mathcal{A}_n)$
converges in probability to $4$  as $n$ growths while 
for the random matrix $\mathcal{A}_n$ with complex i.i.d. Gaussian entries,
 $(1/n)\sigma_n(\mathcal{A}_n)$
converges in probability to $8$.
We stress that the Gumbel distribution is the
limiting law of the spectral radius of Ginibre ensembles, see Theorem~1.1 in \cite{Rider2014}.
Recently, it is shown in Theorem~4 in \cite{ArenasAbreu} that the  Gumbel distribution is the limiting law of the largest eigenvalue of a Gaussian Laplacian matrix. 
For further discussion, we recommend to 
\cite{Aunger2009,BaiSilver988,
BrycSethuraman,Siegfried2020,
DavisMikosh,HeinyMikosch2018,Meckes,
Soshnikov2004,Soshnikov2006}.
\end{rem}
}

{We continue with the proof of 
Corollary~\ref{cor:A}.}
\begin{proof}[Proof of Corollary~\ref{cor:A}]
Items (i)-(iii) follow directly from 
Theorem~\ref{thm:generalcase} and the Continuous Mapping Theorem (see Theorem~13.25 in \cite{Klenke}). 
In the sequel, we prove Item (iv).
By~\eqref{eq:defk} we have
\begin{equation}\label{eq:split}
\frac{\kappa(\mathcal{C}_n)}{\sqrt{\frac{1}{2}n\ln(n)}}=\frac{\fb_n}{{\sqrt{\frac{1}{2}n\ln(n)}}}
\frac{1}{\sigma^{(n)}_{\min}}
\frac{\sigma^{(n)}_{\max}-\fa_n}{\fb_n}+
\frac{\fa_n}{{\sqrt{\frac{1}{2}n\ln(n)}}}\frac{1}{\sigma^{(n)}_{\min}}.
\end{equation}
Theorem~\ref{thm:generalcase}
with the help of the Continuous Mapping Theorem, limits
\[
\lim\limits_{n\to \infty}\frac{\fb_n}{{\sqrt{\frac{1}{2}n\ln(n)}}}=0,\qquad 
\lim\limits_{n\to \infty}
\frac{\fa_n}{{\sqrt{\frac{1}{2}n\ln(n)}}}=\sqrt{2},
\]
and the Slutsky Theorem (see Theorem~13.18 in \cite{Klenke})
implies 
\begin{equation}\label{eq:limitdist}
\begin{split}
\frac{\fb_n}{{\sqrt{\frac{1}{2}n\ln(n)}}}\frac{1}{\sigma^{(n)}_{\min}}
\frac{\sigma^{(n)}_{\max}-\fa_n}{\fb_n}&\longrightarrow 0\cdot \frac{\mathfrak{G}}{\mathfrak{R}}=0, \quad \textrm{ in distribution, } \quad \textrm{ as }\quad n\to \infty,\\
\frac{\fa_n}{{\sqrt{\frac{1}{2}n\ln(n)}}}\frac{1}{\sigma^{(n)}_{\min}}&\longrightarrow \frac{\sqrt{2}}{\mathfrak{R}},
\quad \textrm{ in distribution, } \quad \textrm{ as }\quad n\to \infty,
\end{split}
\end{equation}
where $\mathfrak{G}$ and $\mathfrak{R}$ are random variables with Gumbel and Rayleigh distributions, respectively.
The Slutsky Theorem
with the help of~\eqref{eq:split} and~\eqref{eq:limitdist} implies
\begin{equation}
\frac{\kappa(\mathcal{C}_n)}{\sqrt{\frac{1}{2}n\ln(n)}}\longrightarrow \frac{\sqrt{2}}{\mathfrak{R}},
\quad \textrm{ in distribution, } \quad \textrm{ as }\quad n\to \infty.
\end{equation}
Lemma~\ref{lem:1Ray} in Appendix~\ref{ap:tools} implies (iv) that 
$\frac{\sqrt{2}}{\mathfrak{R}}$ possesses Fr\'echet distribution $F$.
This finishes the proof of Item (iv).
\end{proof}
Recently, the condition number for powers of matrices
has been studied  in \cite{HuangTik2020}.
As a consequence of 
Theorem~\ref{thm:generalcase} we have the following corollary which, 
in particular, gives the limiting distribution of the condition number for the powers of $\mathcal{C}_n$.
\begin{cor}[Asymptotic distribution of $p$-th power of the maximum, the minimum and the condition number]\label{cor:B}
Let the notation and hypothesis of 
Theorem~\ref{thm:generalcase} be valid and take $p\in \mathbb{N}$.
The following holds.
\begin{itemize}
\item[(i)] Asymptotic distribution of the $p$-th power of the normalized maximum. For any $y\in \mathbb{R}$ it follows that
\[
\lim\limits_{n\to \infty}\Pr{\frac{\sigma^{(n)}_{\max}(\mathcal{C}^p_n)-A_n(p)}{B_n(p)}\leq y}=G\left(y\right),
\]
where
$A_n(p)=\fa^p_n$ and
$B_n(p)=p\fb^p_n(2^p-1)$ for all $n\geq 3$.
\item[(ii)] Asymptotic distribution of the $p$-th power of the minimum. For any $x\geq 0$ it follows that
\[
\lim\limits_{n\to \infty}\Pr{\sigma^{(n)}_{\min}(\mathcal{C}^p_n)\leq  x}=R\left(x^{1/p}\right).
\]
\item[(iii)]
Asymptotic distribution of the $p$-th power of the condition number. For any $z>0$ it follows that
\[
\lim\limits_{n\to \infty}
\Pr{\frac{\kappa(\mathcal{C}^p_n)}{(\frac{1}{2}n\ln(n))^{p/2}}\leq  z} =F\left(z^{1/p}\right).
\]
\end{itemize}
\end{cor}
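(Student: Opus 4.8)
\textit{Proof proposal.}
The plan is to reduce everything to the scalar situation of Theorem~\ref{thm:generalcase}. Since $\mathcal{C}_n$ is circulant, the diagonalization~\eqref{eq:decomposition} gives $\mathcal{C}^p_n=\cF^*_n\cD^p_n\cF_n$ with $\cD^p_n=\textnormal{diag}\big((\la^{(n)}_1)^p,\ldots,(\la^{(n)}_n)^p\big)$; in particular $\mathcal{C}^p_n$ is again normal, so its singular values are $\big(|\la^{(n)}_k|^p\big)_{k\in\{0,\ldots,n-1\}}$, whence
\begin{equation*}
\sigma^{(n)}_{\max}(\mathcal{C}^p_n)=\big(\sigma^{(n)}_{\max}\big)^{p},\qquad
\sigma^{(n)}_{\min}(\mathcal{C}^p_n)=\big(\sigma^{(n)}_{\min}\big)^{p},\qquad
\kappa(\mathcal{C}^p_n)=\big(\kappa(\mathcal{C}_n)\big)^{p}.
\end{equation*}
Granting these identities, items (ii) and (iii) are immediate: the map $t\mapsto t^{1/p}$ is continuous and strictly increasing on $[0,\infty)$, so Corollary~\ref{cor:A}(iii) and the Continuous Mapping Theorem give $\Pr{\sigma^{(n)}_{\min}(\mathcal{C}^p_n)\le x}=\Pr{\sigma^{(n)}_{\min}\le x^{1/p}}\to R(x^{1/p})$ for $x\ge0$, while Corollary~\ref{cor:A}(iv) gives $\Pr{\kappa(\mathcal{C}^p_n)/(\tfrac12 n\ln n)^{p/2}\le z}=\Pr{\kappa(\mathcal{C}_n)/\sqrt{\tfrac12 n\ln n}\le z^{1/p}}\to F(z^{1/p})$ for $z>0$.

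The only step that needs a genuine argument is item (i), because the recentring of $\big(\sigma^{(n)}_{\max}\big)^{p}$ must pass through the nonlinear map $t\mapsto t^p$; here I would linearize at $\fa_n$. Put $W_n:=(\sigma^{(n)}_{\max}-\fa_n)/\fb_n$, so that $W_n$ converges in distribution to $\mathfrak{G}$ by Corollary~\ref{cor:A}(ii) and hence $(W_n)_n$ is tight; since $\fb_n/\fa_n=\tfrac12(\ln(n/2))^{-1}\to0$, Slutsky's theorem yields $\sigma^{(n)}_{\max}/\fa_n=1+(\fb_n/\fa_n)W_n\to1$ in probability. Factoring the difference of $p$-th powers,
\begin{equation*}
\big(\sigma^{(n)}_{\max}\big)^{p}-\fa_n^{p}
=\big(\sigma^{(n)}_{\max}-\fa_n\big)\sum_{j=0}^{p-1}\big(\sigma^{(n)}_{\max}\big)^{j}\fa_n^{\,p-1-j}
=\fb_n W_n\,\fa_n^{\,p-1}\sum_{j=0}^{p-1}\Big(\frac{\sigma^{(n)}_{\max}}{\fa_n}\Big)^{j},
\end{equation*}
and noting that $\sum_{j=0}^{p-1}\big(\sigma^{(n)}_{\max}/\fa_n\big)^{j}\to p$ in probability, a second application of Slutsky's theorem shows that $\big(\sigma^{(n)}_{\max}\big)^{p}$, recentred by $\fa_n^{p}$ and rescaled by $p\,\fa_n^{\,p-1}\fb_n$, converges in distribution to $\mathfrak{G}$, which is item (i) with $A_n(p)=\fa_n^{p}$ and $B_n(p)=p\,\fa_n^{\,p-1}\fb_n$.

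The main (and fairly modest) obstacle is making this linearization rigorous, i.e. checking that replacing $\sum_{j=0}^{p-1}(\sigma^{(n)}_{\max}/\fa_n)^{j}$ by its limit $p$ costs only an $o_P(1)$ error after dividing by $B_n(p)$. This is exactly where the tightness of $W_n$ provided by Corollary~\ref{cor:A}(ii) enters, via $\sigma^{(n)}_{\max}/\fa_n-1=(\fb_n/\fa_n)W_n=o_P(1)$ together with continuity at $u=0$ of the polynomial $u\mapsto\sum_{j=0}^{p-1}(1+u)^{j}$. Beyond Theorem~\ref{thm:generalcase} and Corollary~\ref{cor:A}, no further probabilistic input is needed: the whole corollary is a deterministic algebraic reduction --- powers of circulant matrices are circulant and normal --- followed by the Continuous Mapping Theorem and Slutsky's theorem.
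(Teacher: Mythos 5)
Your algebraic reduction is exactly the paper's: $\mathcal{C}^p_n=\cF^*_n\cD^p_n\cF_n$ remains normal, so the extremal singular values and condition number of $\mathcal{C}^p_n$ are just the $p$-th powers, and items (ii)--(iii) follow at once from Corollary~\ref{cor:A} and continuity of $t\mapsto t^{1/p}$. For item (i) you take a genuinely different route: the paper works on the threshold, setting $B_n=p\sum_{j=0}^{p-1}\fa_n^j\fb_n^{p-j}$, rewriting the probability as $\mathbb{P}\big((\sigma^{(n)}_{\max}-\fa_n)/\fb_n\le((B_ny+A_n)^{1/p}-\fa_n)/\fb_n\big)$, showing the threshold tends to $y$ by L'H\^opital, and invoking Lemma~\ref{lem:doublelimit}; you instead factor $a^p-b^p$ and push the random factor $p^{-1}\sum_{j=0}^{p-1}(\sigma^{(n)}_{\max}/\fa_n)^j\to1$ through Slutsky. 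Both are correct, and the two normalizing sequences ($p\fa_n^{p-1}\fb_n$ in your case and the paper's sum) are asymptotically equivalent, since $\sum_{j=0}^{p-1}\fa_n^j\fb_n^{p-j}/(\fa_n^{p-1}\fb_n)=\sum_{m=0}^{p-1}(\fb_n/\fa_n)^m\to1$.

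There is, however, a discrepancy you should flag explicitly: the statement (and the last line of the paper's proof) asserts $B_n(p)=p\fb_n^p(2^p-1)$, which does \emph{not} equal either your $p\fa_n^{p-1}\fb_n$ or the paper's own intermediate $p\sum_{j=0}^{p-1}\fa_n^j\fb_n^{p-j}$. Since $\fa_n/\fb_n=2\ln(n/2)$, the claimed closed form would require $\sum_{j=0}^{p-1}(2\ln(n/2))^j=2^p-1$, i.e.\ $\ln(n/2)=1$. Worse, with $B_n=p\fb_n^p(2^p-1)$ the threshold in the paper's own argument satisfies
\begin{equation}
\frac{(B_ny+\fa_n^p)^{1/p}-\fa_n}{\fb_n}\sim\frac{B_ny}{p\fa_n^{p-1}\fb_n}=\frac{(2^p-1)\,y}{(2\ln(n/2))^{p-1}}\longrightarrow 0\quad\text{for }p\ge2,
\end{equation}
so the limit would be $G(0)=e^{-1}$ for every $y$, not $G(y)$. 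In short, your derivation is sound and yields a correct normalizing constant $B_n(p)=p\fa_n^{p-1}\fb_n$; the printed $B_n(p)=p\fb_n^p(2^p-1)$ in Corollary~\ref{cor:B}(i) is the result of an erroneous simplification and does not follow from the argument. Your proposal should state this correction rather than silently substituting a different constant.
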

\begin{proof}
By~\eqref{eq:decomposition}
we have  $\mathcal{C}^p_n=\cF^*_n \cD^p_n \cF_n$ for any $p\in \mathbb{N}$. Therefore, $\mathcal{C}^p_n$ is a  normal matrix and then
\[
\sigma^{(n)}_{\min}(\mathcal{C}^p_n)=(\sigma^{(n)}_{\min}(\mathcal{C}_n))^p,\quad
\sigma^{(n)}_{\max}(\mathcal{C}^p_n)=(\sigma^{(n)}_{\max}(\mathcal{C}_n))^p
\quad
\textrm{ and } \quad
\kappa(\mathcal{C}^p_n)=(\kappa(\mathcal{C}_n))^p.
\]
Consequently, the statements (ii)-(iii) are  direct consequences of 
Theorem~\ref{thm:generalcase} with the help of 
Lemma~\ref{lem:doublelimit} in Appendix~\ref{ap:tools} and the Continuous Mapping Theorem.

In the sequel, we prove (i).
For $p=1$, item (i) follows directly from item (ii) of Corollary~\ref{cor:A}. For any $p\in \mathbb{N}\setminus \{1\}$
set
\[
A_n:=A_n(p)=\fa^p_n\quad \textrm{ and } \quad
B_n=p\sum\limits_{j=0}^{p-1} \fa^j_n \fb^{p-j}_n.
\]
Let $y\in \mathbb{R}$ and observe that 
$B_n y+A_n\to \infty$ as $n\to \infty$ due to
$\fb_n/ \fa_n\to 0$ as $n\to \infty$.
Then for $n$ large enough we have 
\begin{equation}\label{eq:GGG}
\begin{split}
\Pr{\frac{\sigma^{(n)}_{\max}(\mathcal{C}^p_n)-A_n}{B_n}\leq y}&=\Pr{\sigma^{(n)}_{\max}(\mathcal{C}_n)\leq (B_n y+A_n)^{1/p}}\\
&{=}\Pr{\frac{\sigma^{(n)}_{\max}(\mathcal{C}_n)-\fa_n}{\fb_n}\leq \frac{(B_n y+A_n)^{1/p}-\fa_n}{\fb_n}}.
\end{split}
\end{equation}
We claim that
\begin{equation}
\lim\limits_{n\to \infty}\frac{(B_n y+A_n)^{1/p}-\fa_n}{\fb_n}=y\quad \textrm{ for any }\quad y\in \mathbb{R}.
\end{equation} 
Indeed, notice that 
\begin{equation}
\begin{split}
\frac{(B_n y+A_n)^{1/p}-\fa_n}{\fb_n}&=\frac{\fa_n}{\fb_n}\left(\left(p\sum\limits_{j=0}^{p-1} \fa^{j-p}_n \fb^{p-j}_n y+1\right)^{1/p}-1\right)\\
&=\frac{1}{\delta_n}\left(\left(py\sum\limits_{j=1}^{p} \delta^{j}_n +1\right)^{1/p}-1\right)=\frac{1}{\delta_n}\left(\left(py\sum\limits_{j=1}^{p} \delta^{j}_n +1\right)^{1/p}-1\right),
\end{split}
\end{equation}
where $\delta_n:=\fb_n/ \fa_n$. 
Since 
$\delta_n\to 0$ as $n\to \infty$, L'H\^opital's rule yields
\begin{equation}
\begin{split}
\lim\limits_{n\to \infty}\frac{1}{\delta_n}\left(\left(py\sum\limits_{j=1}^{p} \delta^{j}_n +1\right)^{1/p}-1\right)
=\lim\limits_{n\to \infty} \left[\frac{1}{p}\left(py\sum\limits_{j=1}^{p} \delta^{j}_n +1\right)^{1/p-1}py \sum\limits_{j=1}^{p} j\delta^{j-1}_n\right]=y.
\end{split}
\end{equation}
Hence,~\eqref{eq:GGG} with the help of 
Lemma~\ref{lem:doublelimit} in Appendix~\ref{ap:tools} implies
\[
\lim\limits_{n\to \infty}\Pr{\frac{\sigma^{(n)}_{\max}(\mathcal{C}^p_n)-A_n}{B_n}\leq y}=G(y)\quad \textrm{ for any }\quad y\in\mathbb{R}.
\]
By~\eqref{eq:defab},
a straightforward computation yields
$B_n=B_n(p)=p\fb^p_n(2^p-1)$ for all $n\geq 3$.
This finishes the proof of item (i).
\end{proof}
The following proposition states the standard Gaussian case. It is an immediately consequence of 
Theorem~\ref{thm:generalcase}. However, since the random variables are i.i.d. with standard Gaussian distribution, 
the computation of 
the law of~\eqref{eq:jointvc} and its limiting distribution
 can be carried out explicitly.
  We decide to include an alternative and instructive proof in order to illustrate how  {we identify} the right-hand side of~\eqref{eq:RGlimite}. 
{
\begin{prop}[Joint asymptotic distribution of the minimum and the maximum: the Gaussian case]\label{thm:Gaussiancase}
Let $(\xi_j)_{j\in \mathbb{N}_0}$ be a sequence of  i.i.d. real Gaussian coefficients with zero mean and unit variance. Then it follows that
\begin{equation}\label{eq:prolimit}
\lim\limits_{n\to \infty}\Pr{\sigma^{(n)}_{\min}\leq  x,\frac{\sigma^{(n)}_{\max}-\fa_n}{\fb_n}\leq y}=R(x)G(y)
\quad \textrm{ for any } x\geq 0 \textrm{ and } y\in \mathbb{R},
\end{equation}
where the functions $R$ and $G$
are given in~\eqref{eq:defRG},
 and the normalizing constants $\fa_n$, $\fb_n$ are defined in~\eqref{eq:defab}.
\end{prop}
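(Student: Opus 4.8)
The plan is to bypass the invariance-principle machinery and use the exact Gaussianity: when the $\xi_j$ are i.i.d.\ standard Gaussian, the eigenvalues $\la^{(n)}_0,\ldots,\la^{(n)}_{n-1}$ of~\eqref{eq:eigenvals} are jointly complex Gaussian with a fully explicit covariance, so $\sigma^{(n)}_{\min}$ and $\sigma^{(n)}_{\max}$ become extreme order statistics of an i.i.d.\ sample whose common law is known in closed form, and the whole statement collapses to two one-line extreme-value limits.

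\textbf{Step 1: the exact joint law.} Write $\xi=(\xi_0,\ldots,\xi_{n-1})$ and, for $c_k=(\cos(2\pi kj/n))_{j=0}^{n-1}$ and $s_k=(\sin(2\pi kj/n))_{j=0}^{n-1}$, $\la^{(n)}_k=\langle\xi,c_k\rangle+\mathsf{i}\langle\xi,s_k\rangle$. Put $m:=\lceil n/2\rceil-1$. The family $\{c_0\}\cup\{c_k,s_k:1\le k\le m\}$, augmented by $((-1)^j)_{j=0}^{n-1}$ when $n$ is even, is an orthogonal basis of $\R^n$ with $\|c_0\|^2=\|((-1)^j)_j\|^2=n$ and $\|c_k\|^2=\|s_k\|^2=n/2$ for $1\le k\le m$. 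Since the coordinates of a standard Gaussian vector along an orthogonal family are independent centred Gaussians with variances equal to the squared norms, it follows that $\la^{(n)}_0\sim\NN(0,n)$ (and $\la^{(n)}_{n/2}\sim\NN(0,n)$ when $n$ is even) are real, that $\la^{(n)}_1,\ldots,\la^{(n)}_m$ are i.i.d.\ rotationally invariant complex Gaussians with $\Pr{|\la^{(n)}_k|\le x}=1-e^{-x^2/n}$ for $x\ge0$ (equivalently $|\la^{(n)}_k|^2$ is exponential with mean $n$, i.e.\ $|\la^{(n)}_k|$ is Rayleigh with scale $\sqrt{n/2}$ by the identity recalled before Corollary~\ref{cor:A}), that $\la^{(n)}_{n-k}=\overline{\la^{(n)}_k}$, and that the block $(\la^{(n)}_1,\ldots,\la^{(n)}_m)$ is independent of $\la^{(n)}_0$ and of $\la^{(n)}_{n/2}$.

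\textbf{Step 2: reduction to i.i.d.\ extremes.} Put $M^{-}_n:=\min_{1\le k\le m}|\la^{(n)}_k|$, $M^{+}_n:=\max_{1\le k\le m}|\la^{(n)}_k|$, $Z_n:=|\la^{(n)}_0|$, and $Z'_n:=|\la^{(n)}_{n/2}|$ when $n$ is even. Then $\sigma^{(n)}_{\min}=\min(Z_n,Z'_n,M^{-}_n)$ and $\sigma^{(n)}_{\max}=\max(Z_n,Z'_n,M^{+}_n)$, with $Z'_n$ omitted for odd $n$. Since $Z_n/\sqrt n$ and $Z'_n/\sqrt n$ converge in distribution to $|\NN(0,1)|$ while $(\fa_n+\fb_n y)/\sqrt n=\sqrt{\ln(n/2)}+o(1)\to\infty$, for fixed $x\ge0$ and $y\in\R$ we get $\Pr{x<Z_n\le\fa_n+\fb_n y}\to1$, and the same for $Z'_n$. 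Using $\LL{\sigma^{(n)}_{\min}>x}=\LL{Z_n>x}\cap\LL{Z'_n>x}\cap\LL{M^{-}_n>x}$ and $\LL{\sigma^{(n)}_{\max}\le t}=\LL{Z_n\le t}\cap\LL{Z'_n\le t}\cap\LL{M^{+}_n\le t}$, the independence of $Z_n,Z'_n$ from $(M^{-}_n,M^{+}_n)$, and the identity
\begin{equation*}
\Pr{\sigma^{(n)}_{\min}\le x,\ \sigma^{(n)}_{\max}\le\fa_n+\fb_n y}=\Pr{\sigma^{(n)}_{\max}\le\fa_n+\fb_n y}-\Pr{\sigma^{(n)}_{\min}>x,\ \sigma^{(n)}_{\max}\le\fa_n+\fb_n y},
\end{equation*}
both probabilities on the right factor, the $Z_n,Z'_n$ factors tend to $1$, and the statement reduces to the limits of $\Pr{M^{+}_n\le\fa_n+\fb_n y}$ and $\Pr{M^{-}_n>x,\ M^{+}_n\le\fa_n+\fb_n y}$.

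\textbf{Step 3: two elementary limits, and the obstacle.} With $F_n(t):=(1-e^{-t^2/n})\Ind{t\ge0}$, Step~1 gives $\Pr{M^{+}_n\le t}=F_n(t)^m$ and $\Pr{M^{-}_n>x,\ M^{+}_n\le t}=(F_n(t)-F_n(x))^m$. Inserting $t=\fa_n+\fb_n y$ with $\fa_n=\sqrt{n\ln(n/2)}$ and $\fb_n=\tfrac12\sqrt{n/\ln(n/2)}$ — so $2\fa_n\fb_n=n$, $\fb_n^2/n\to0$, hence $(\fa_n+\fb_n y)^2/n=\ln(n/2)+y+o(1)$ — and using $2m/n\to1$ yields $m\,e^{-(\fa_n+\fb_n y)^2/n}\to e^{-y}$. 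A one-line expansion (also using $m(1-e^{-x^2/n})\to x^2/2$) then gives $F_n(\fa_n+\fb_n y)^m\to e^{-e^{-y}}=G(y)$ and $(F_n(\fa_n+\fb_n y)-F_n(x))^m\to e^{-x^2/2}\,e^{-e^{-y}}=(1-R(x))\,G(y)$, and substituting into the Step~2 identity produces the limit $G(y)-(1-R(x))G(y)=R(x)G(y)$, which is~\eqref{eq:prolimit}. The only genuinely delicate point I anticipate is Step~2 — certifying that the one or two low-order \emph{real} eigenvalues $\la^{(n)}_0$ (and $\la^{(n)}_{n/2}$), which are of order $\sqrt n$ and hence neither as small as $M^{-}_n$ nor as large as $M^{+}_n$, truly drop out of both extremes in the limit, and keeping the bookkeeping uniform in the parity of $n$; Step~3 is elementary here precisely because the marginal law of $|\la^{(n)}_k|$ is explicit, which is exactly what the general Lyapunov setting of Theorem~\ref{thm:generalcase} lacks.
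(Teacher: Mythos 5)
Your proposal is correct and follows essentially the same route as the paper's own proof: both exploit the orthogonality of the cosine/sine vectors to get exact independence and the Rayleigh/exponential marginals for the moduli $\abs{\lambda^{(n)}_k}$, both isolate the one or two real eigenvalues $\lambda^{(n)}_0$ (and $\lambda^{(n)}_{n/2}$ for even $n$), and both finish by an elementary product-limit computation (the paper via Lemma~\ref{lem:080720201357}, you via the explicit factoring in Step~2). The only difference is organizational: you dispose of the $\chi_1$-type real eigenvalues before the limit, whereas the paper carries them through the product and observes their factors tend to $1$.
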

\begin{proof}
Let $q_n:=\lfloor n/2\rfloor$. For $1\leq k\leq q_n$ we note that 
$\lambda^{(n)}_k = \overline{\lambda^{(n)}_{n-k}}$, where $\overline{\lambda}$ denotes the complex conjugate of  a complex number $\lambda$.
Then we have
\begin{equation}
\sigma^{(n)}_{\max} = \max\LL{\abs{\lambda^{(n)}_k} : 0\leq k\leq q_n}
\quad \textrm{ and } \quad
\sigma^{(n)}_{\min} =  \min\LL{\abs{\lambda^{(n)}_k} : 0\leq k\leq q_n}. 
\end{equation}
The complex modulus of $\lambda^{(n)}_k$ is given by 
 $\abs{\lambda^{(n)}_{k}}=\sqrt{c_{k,n}^2+s_{k,n}^2}$, where 
$c_{k,n} := \sum_{j=0}^{n-1} \xi_j \cos\PP{2\pi\frac{kj}{n}}$ and $ s_{k,n}:= \sum_{j=0}^{n-1} \xi_j \sin\PP{2\pi\frac{kj}{n}}$ for  $0\leq k \leq q_n$.
Note that $s_{0,n}=0$.
By 
straightforward computations  we obtain for any $n\in \mathbb{N}$
\begin{itemize}
\item[(i)] $\E{c_{k,n}}=\E{s_{k,n}}=0$\;\; for $0\leq k\leq q_n$,
\item[(ii)] $\E{c^2_{0,n}}=n$, $\E{c_{k,n}^2}=\E{s_{k,n}^2}=\frac{n}{2}$\;\; for $1\leq k\leq q_n$,
\item[(iii)] $\E{c_{k,n}\cdot s_{l,n}}=\E{c_{k,n}\cdot c_{l,n}}=\E{s_{k,n}\cdot s_{l,n}}= 0$\;\; for $0\leq l<k\leq q_n$.
\end{itemize}
Then (i), (ii) and (iii) implies that $\frac{1}{\sqrt{n}}\PP{c_{0,n},c_{1,n},s_{1,n},\ldots,c_{q_n,n},s_{q_n,n}}$ is a Gaussian vector such that its entries are not correlated, i.e., it has independent Gaussian entries. Thus, the random vector
$
\frac{1}{n}\big(c_{0,n}^2,c^2_{1,n}+s^2_{1,n},\ldots,c^2_{q_n,n}+s^2_{q_n,n}\big)
=\frac{1}{n}\big(|\lambda^{(n)}_{0}|^2,|\lambda^{(n)}_{1}|^2,\ldots,|\lambda^{(n)}_{q_n}|^2\big)
$  has independent random entries satisfying
\begin{equation}\label{eq:distribution9}
\begin{cases}
\frac{1}{n}|\lambda^{(n)}_{0}|^2 & \textrm{has a Chi-square distribution $\chi_1$ with one-degree of freedom},\\
\frac{1}{n}|\lambda^{(n)}_{k}|^2 & \textrm{has an exponential distribution $\mathsf{E}_1$ with parameter one for } 1\leq k\leq q_n-1
\end{cases}
\end{equation}
and 
\begin{equation}\label{eq:distribution99}
\begin{cases}
\frac{1}{n}|\lambda^{(n)}_{q_n}|^2 & 
\textrm{has $\chi_1$ distribution for $n$ being an even number (due to $s_{q_n,n}=0$)},\\
\frac{1}{n}|\lambda^{(n)}_{q_n}|^2 & 
\textrm{has $\mathsf{E}_1$ distribution for $n$ being an odd number}. 
\end{cases}
\end{equation}
For $n$ being an odd number,~\eqref{eq:distribution9},~\eqref{eq:distribution99} and Lemma~\ref{lem:080720201357} in 
Appendix~\ref{ap:tools} imply
\begin{equation}\label{eq:prodct}
\begin{split}
\Pr{\sigma^{(n)}_{\min}\leq  x,\sigma^{(n)}_{\max}\leq \fb_n y+\fa_n}&=\prod_{j=0}^{q_n} \Pr{|\lambda^{(n)}_j|\leq \fb_n y+\fa_n }-
\prod_{j=0}^{q_n} \Pr{x<|\lambda^{(n)}_j|\leq \fb_n y+\fa_n }\\
&\hspace{-2cm}=\prod_{j=0}^{q_n} \Pr{
\frac{|\lambda^{(n)}_j|^2}{n}\leq \frac{(\fb_n y+\fa_n)^2}{n} }-
\prod_{j=0}^{q_n} \Pr{\frac{x^2}{n}<
\frac{|\lambda^{(n)}_j|^2}{n}\leq \frac{(\fb_n y+\fa_n)^2}{n} }\\
&\hspace{-2cm}=\Pr{
\chi_1\leq \frac{(\fb_n y+\fa_n)^2}{n} }\left(\Pr{
\textsf{E}_1\leq \frac{(\fb_n y+\fa_n)^2}{n} }\right)^{q_n}\\
&\hspace{-2cm}\qquad-
\Pr{\frac{x^2}{n}<\chi_1\leq \frac{(\fb_n y+\fa_n)^2}{n} }\left(\Pr{\frac{x^2}{n}<
\textsf{E}_1\leq \frac{(\fb_n y+\fa_n)^2}{n} }\right)^{q_n}.
\end{split}
\end{equation}
We observe that 
$
\lim\limits_{n\to \infty} \frac{x^2}{n}=0$ and $
\lim\limits_{n\to \infty}\frac{(\fb_n y+\fa_n)^2}{n}=\infty$ for any $x,y\in \mathbb{R}$.
Recall that for any non-negative numbers $u$ and $v$ such that $u<v$ it follows that
$
\Pr{u<\textsf{E}_1\leq v}=\exp(-u)-\exp(-v)$.
Then,
a straightforward calculation yields
\begin{equation}
\begin{split}
\left(\Pr{\frac{x^2}{n}<
\textsf{E}_1\leq \frac{(\fb_n y+\fa_n)^2}{n} }\right)^{q_n}
=\exp\left(-\frac{x^2}{2}\right)
\left(1- 
\frac{\exp\left(\frac{x^2}{n}-\frac{y^2}{4\ln(n/2)}\right)\exp(-y)}{n/2}
\right)^{n/2-1/2}.
\end{split}
\end{equation}
Hence, for any $x,y$ it follows that
\[
\lim\limits_{n\to \infty}\left(\Pr{\frac{x^2}{n}<
\textsf{E}_1\leq \frac{(\fb_n y+\fa_n)^2}{n} }\right)^{q_n}=\exp\left(-\frac{x^2}{2}\right)\exp\left(-e^{-y}\right).
\]
The preceding limit with the help of~\eqref{eq:prodct} implies limit~\eqref{eq:prolimit}.
Similar reasoning yields the proof when $n$ is an even number. 
This finishes the proof 
of Proposition~\ref{thm:Gaussiancase}.
\end{proof}
}
The rest of the manuscript is organized as follows.
Section~\ref{sec:approx} is divided in five subsections.
In Subsection~\ref{sub:removing} 
we prove that the asymptotic behavior of $|\lambda^{(n)}_0|$ can be removed from our computations.
In Subsection~\ref{sec:bounded} we establish that the original sequence of random variables can be assumed to be bounded.
In Subsection~\ref{sec:smooth} we provide a procedure in which  we smooth (by a Gaussian perturbation) the bounded sequence obtained in Subsection~\ref{sec:bounded}.
In Subsection~\ref{sec:boundedsmooth} we prove the main result for the bounded and smooth sequence given in Subsection~\ref{sec:smooth}.
Finally, in Section~\ref{sec:prueba} we summarize all the results proved in previous subsections and show  
Theorem~\ref{thm:generalcase}. 
{
Finally, there is Appendix~\ref{ap:tools}
 which collects
technical results used in the main text.
}
\section{\textbf{Koml\'os--Major--Tusn\'ady approximation}}\label{sec:approx}
In this section, we show that
Theorem~\ref{thm:generalcase} can be {deduced} by a Gaussian approximation in which computations can be carried out. 
Roughly speaking, we show that
\begin{equation}\label{eq:reduction9}
\Pr{\sigma^{(n)}_{\min}\leq  x,\; \frac{\sigma^{(n)}_{\max}-\fa_n}{\fb_n}\leq y}  - 
   \Pr{{\sigma}^{(n,\NN)}_{\min} \leq \sqrt{\frac{2}{n}} x,\; \sigma^{(n,\NN)}_{\max}\leq  \sqrt{\frac{2}{n}}(\fb_n y+\fa_n) }=\textnormal{o}_n(1),
\end{equation}
where $\textnormal{o}_n(1)\to 0$ as $n\to \infty$, $(\fa_n)_{n\in \mathbb{N}}$ and $(\fb_n)_{n\in \mathbb{N}}$ are the sequences defined in~\eqref{eq:defab} in 
Theorem~\ref{thm:generalcase}, and 
$\sigma^{(n,\NN)}_{\min}$ and $\sigma^{(n,\NN)}_{\max}$ denote the smallest and the largest singular values, respectively, 
of a random circulant matrix with generating sequence given by i.i.d. bounded and smooth random variables which
can be well-approximated by a standard Gaussian distribution.

Along this section, for any set $A\subset \Omega$, we denote its complement with respect to $\Omega$ by $A^{\textnormal{c}}$.
We also point out the following immediately relation:
\begin{equation}\label{eq:equiv}
\begin{split}
&\Pr{\sigma^{(n)}_{\min}\leq  x,\; \frac{\sigma^{(n)}_{\max}-\fa_n}{\fb_n}\leq y}\\
&\hspace{3cm}= \Pr{ \bigcap_{k=0}^{n-1}\LL{\abs{\lambda^{(n)}_k}\leq \fb_n y + \fa_n}} - \Pr{\bigcap_{k=0}^{n-1}\LL{x < \abs{\lambda^{(n)}_k}\leq \fb_n y + \fa_n}}.
\end{split}
\end{equation}
\subsection{\textbf{Removing the singular value $|\lambda^{(n)}_0|$}}\label{sub:removing}
In this subsection we show that $\left|\lambda^{(n)}_0\right|$ can be removed from our computations.
In other words, we only need to consider our computations over the array $\PP{\abs{\lambda^{(n)}_k}}_{k\in \{1,\ldots,n-1\}}$ as the following lemma states.
\begin{lem}[Asymptotic behavior of $|\lambda^{(n)}_0|$ is negligible]\label{lem:DifZero070520291540}
Assume that Hypothesis~\ref{hyp:lc} is valid.
Then for any $x,y\in\R$ it follows that
\begin{equation}\label{eq:limitremoving}
\lim\limits_{n\to \infty}
\left|
\Pr{\bigcap_{k=1}^{n-1} \LL{x < \abs{\lambda^{(n)}_k}\leq \fb_n y + \fa_n}} - \Pr{\bigcap_{k=0}^{n-1} \LL{x < \abs{{\lambda}^{(n)}_k}\leq \fb_n y + \fa_n}}\right|=0.
\end{equation}
\end{lem}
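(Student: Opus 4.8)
The plan is to show that the event $\{x<|\lambda^{(n)}_0|\leq \fb_n y+\fa_n\}$ is asymptotically almost sure, so that inserting or deleting the factor corresponding to $k=0$ in the intersection changes the probability by an amount tending to $0$. Write $A:=\bigcap_{k=1}^{n-1}\LL{x<|\lambda^{(n)}_k|\leq \fb_n y+\fa_n}$ and $B:=\LL{x<|\lambda^{(n)}_0|\leq \fb_n y+\fa_n}$, so that the two probabilities in~\eqref{eq:limitremoving} are $\Pr{A}$ and $\Pr{A\cap B}$. Then $|\Pr{A}-\Pr{A\cap B}|=\Pr{A\cap B^{\textnormal{c}}}\leq \Pr{B^{\textnormal{c}}}$, and it suffices to prove $\Pr{B^{\textnormal{c}}}\to 0$.

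Next I would analyze $\lambda^{(n)}_0=\sum_{j=0}^{n-1}\xi_j$ directly. Since the $\xi_j$ are i.i.d. with mean $0$ and variance $1$, the Central Limit Theorem gives $\lambda^{(n)}_0/\sqrt{n}\Rightarrow \mathcal{N}(0,1)$; in particular $|\lambda^{(n)}_0|/\sqrt{n}$ is tight and converges in distribution to the absolute value of a standard Gaussian. For fixed $x\in\R$ we have $x/\sqrt{n}\to 0$, so $\Pr{|\lambda^{(n)}_0|\leq x}=\Pr{|\lambda^{(n)}_0|/\sqrt{n}\leq x/\sqrt{n}}\to 0$ because the limiting law $|\mathcal{N}(0,1)|$ has no atom at $0$ (continuity of the limiting distribution function at $0$ from the right, with value $0$). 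For the upper tail, $\fb_n y+\fa_n=\sqrt{n\ln(n/2)}(1+\textnormal{o}(1))$, hence $(\fb_n y+\fa_n)/\sqrt{n}=\sqrt{\ln(n/2)}(1+\textnormal{o}(1))\to\infty$, so by Chebyshev's inequality $\Pr{|\lambda^{(n)}_0|>\fb_n y+\fa_n}\leq \frac{\E{(\lambda^{(n)}_0)^2}}{(\fb_n y+\fa_n)^2}=\frac{n}{(\fb_n y+\fa_n)^2}\to 0$. Combining, $\Pr{B^{\textnormal{c}}}\leq \Pr{|\lambda^{(n)}_0|\leq x}+\Pr{|\lambda^{(n)}_0|>\fb_n y+\fa_n}\to 0$, which gives~\eqref{eq:limitremoving}.

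There is essentially no serious obstacle here; the only points requiring mild care are the two asymptotics just used, namely that the deterministic thresholds $x$ and $\fb_n y+\fa_n$ fall, respectively, well inside and well above the $\sqrt{n}$-scale on which $|\lambda^{(n)}_0|$ lives, and that the limiting law $|\mathcal{N}(0,1)|$ assigns zero mass to $\{0\}$ so that the lower cut does not see an atom. For negative $x$ the event $\{|\lambda^{(n)}_0|\leq x\}$ is empty and the lower-tail term is simply $0$, so the argument covers all $x\in\R$. The same two-sided estimate can be recorded as a standalone observation and reused later, since the upper-tail bound $n/(\fb_n y+\fa_n)^2\to 0$ and the CLT-based lower bound are exactly the ingredients needed to discard $|\lambda^{(n)}_0|$ in all subsequent reductions.
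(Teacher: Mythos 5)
Your proposal is correct and follows essentially the same route as the paper: decompose the difference as $\Pr{A\cap B^{\textnormal{c}}}\leq\Pr{B^{\textnormal{c}}}$ for the $k=0$ event, then split $\Pr{B^{\textnormal{c}}}$ into a lower- and an upper-tail term and show each tends to zero after rescaling by $\sqrt{n}$. The only (cosmetic) difference is that for the upper tail you use Chebyshev's inequality directly, whereas the paper invokes its Lemma~\ref{lem:doublelimit}(ii) (convergence in distribution plus a threshold going to infinity); both are valid, and your Chebyshev bound is in fact slightly more elementary since it only needs the second moment rather than the CLT.
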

\begin{proof} 
For any $n\in \mathbb{N}$ we set 
\[
A:=\bigcap_{k=0}^{n-1} \LL{x < \abs{\lambda^{(n)}_k}\leq \fb_n y + \fa_n}\quad \textrm{ and } \quad B:=\bigcap_{k=1}^{n-1} \LL{x < \abs{{\lambda}^{(n)}_k}\leq \fb_n y + \fa_n}.
\] 
Since ${A}\subset {B}$, we have
\begin{equation}\label{eq:removing1}
\begin{split}
\abs{\Pr{B}-\Pr{A}}&=\Pr{B}-\Pr{A}  =  \Pr{{B}\setminus {A}}= 
\Pr{B\cap \LL{x < \abs{{\lambda}^{(n)}_0}\leq \fb_n y + \fa_n}^\textnormal{c}}
\\
&\leq
\Pr{\LL{x < \abs{{\lambda}^{(n)}_0}\leq \fb_n y + \fa_n}^\textnormal{c}}\\
&= \Pr{\abs{\lambda^{(n)}_0} \leq x } + \Pr{\abs{\lambda^{(n)}_0} > \fb_n y + \fa_n} \\
& = \Pr{\abs{\frac{\lambda^{(n)}_0}{\sqrt{n}} } \leq \frac{x}{\sqrt{n}} } + \Pr{\abs{\frac{\lambda^{(n)}_0}{\sqrt{n}}} > \frac{\fb_n y + \fa_n}{\sqrt{n}} }.
\end{split}
\end{equation}
By~\eqref{eq:eigenvals} we have
 $\lambda^{(n)}_0=\sum_{j=0}^{n-1}\xi_j$. Since $\xi_0,\ldots,\xi_{n-1}$ are i.i.d. non-degenerate zero mean  random variables with finite second moment,
the Central Limit Theorem yields
\begin{equation}\label{eq:clt1}
\frac{\lambda^{(n)}_0}{\sqrt{n}}\longrightarrow \textnormal{N}(0,\mathbb{E}[|\xi_0|^{2}), \quad \textrm{ in distribution, } \quad \textrm{ as }\quad n\to \infty,
\end{equation}
where $\textnormal{N}(0,\mathbb{E}[|\xi_0|^{2})$ denotes the Gaussian distribution with zero mean and variance $\mathbb{E}[|\xi_0|^{2}]$.
We note that for any $x,y\in \mathbb{R}$ the following limits holds
\begin{equation}\label{eq:limites1}
\lim\limits_{n\to \infty}\frac{x}{\sqrt{n}}=0\quad \textrm{ and } \quad 
\lim\limits_{n\to \infty}\frac{\fb_n y + \fa_n}{\sqrt{n}}=\infty.
\end{equation}
Hence~\eqref{eq:removing1},~\eqref{eq:clt1} 
and~\eqref{eq:limites1} with the help of 
Lemma~\ref{lem:doublelimit} in 
Appendix~\ref{ap:tools} imply~\eqref{eq:limitremoving}.              
\end{proof}
\subsection{\textbf{Reduction to the bounded case}}\label{sec:bounded}
In this subsection, we prove that it is enough to prove Theorem~\ref{thm:generalcase} for bounded random variables. That is,
the random variables $(\xi_j)_{j\in \{0,\ldots,n-1\}}$ in $\lambda^{(n)}_k = \sum_{j=0}^{n-1} \om^{kj}_n \xi_j$ can be considered bounded for all $j\in \{0,\ldots,n-1\}$. 
Following the spirit of Lemma~4 in \cite{BrycSethuraman} we obtain 
the following comparison.
\begin{lem}[Truncation procedure]\label{lem:bounded}
Assume that Hypothesis~\ref{hyp:lc} is valid. 
For each
 $n\in \mathbb{N}$ and $j\in \{0,\ldots n-1\}$ define
the array of random variables $\big(\widetilde{\xi}_j^{(n)}\big)_{j\in\{0,\ldots,n-1\}}$ by
\begin{equation}\label{eq:defarray}
\widetilde{\xi}_j^{(n)}:= \xi_j\Ind{\abs{\xi_j}\leq n^{1/s}} - \E{\xi_j\Ind{\abs{\xi_j}\leq n^{1/s}}},
\end{equation}
where $s=2+\delta$ and $\delta$ is the constant that appears in Hypothesis~\ref{hyp:lc}.
For each $k\in \{1,\ldots,n-1\}$, set
\begin{equation}\label{e:eigentilde}
\widetilde{\lambda}^{(n)}_k:= \sum_{j=0}^{n-1} \widetilde{\xi}^{(n)}_j\, \om^{kj}_n.
\end{equation} 
Then it follows that 
\begin{equation}\label{eq:limite01}
\mathbb{P}\left(\lim\limits_{n\to \infty}
\max_{1\leq k \leq n-1} \abs{\abs{\lambda^{(n)}_k} - \abs{\widetilde{\lambda}^{(n)}_k}}=0\right)=1.
\end{equation}
\end{lem}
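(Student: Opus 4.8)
The plan is to bound the uniform (in $k$) difference $\abs{\,\abs{\lambda^{(n)}_k}-\abs{\widetilde\lambda^{(n)}_k}\,}$ by a single scalar quantity that does not depend on $k$, and then show this quantity tends to $0$ almost surely via the Borel--Cantelli lemma; this is in the spirit of Lemma~4 in \cite{BrycSethuraman}. First, the reverse triangle inequality gives $\abs{\,\abs{\lambda^{(n)}_k}-\abs{\widetilde\lambda^{(n)}_k}\,}\le\abs{\lambda^{(n)}_k-\widetilde\lambda^{(n)}_k}$ for every $k$, so it suffices to estimate $\abs{\lambda^{(n)}_k-\widetilde\lambda^{(n)}_k}$. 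Writing $m_n:=\E{\xi_0\Ind{\abs{\xi_0}\le n^{1/s}}}$ (a finite deterministic constant, equal to $\E{\xi_j\Ind{\abs{\xi_j}\le n^{1/s}}}$ for every $j$ by the i.i.d.\ assumption), \eqref{eq:defarray} reads $\widetilde\xi^{(n)}_j=\xi_j\Ind{\abs{\xi_j}\le n^{1/s}}-m_n$, so that $\xi_j-\widetilde\xi^{(n)}_j=\xi_j\Ind{\abs{\xi_j}>n^{1/s}}+m_n$ and, by \eqref{eq:eigenvals} and \eqref{e:eigentilde},
\[
\lambda^{(n)}_k-\widetilde\lambda^{(n)}_k=\sum_{j=0}^{n-1}\xi_j\Ind{\abs{\xi_j}>n^{1/s}}\,\om^{kj}_n+m_n\sum_{j=0}^{n-1}\om^{kj}_n.
\]
For $k\in\{1,\ldots,n-1\}$ one has $\om^k_n\ne 1$, hence the geometric sum $\sum_{j=0}^{n-1}\om^{kj}_n$ vanishes; this is precisely the reason $\abs{\lambda^{(n)}_0}$ was discarded in Subsection~\ref{sub:removing}. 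Since $\abs{\om^{kj}_n}=1$, we conclude
\[
\max_{1\le k\le n-1}\abs{\lambda^{(n)}_k-\widetilde\lambda^{(n)}_k}\le\sum_{j=0}^{n-1}\abs{\xi_j}\Ind{\abs{\xi_j}>n^{1/s}}=:S_n,
\]
and the lemma reduces to proving $\lim_{n\to\infty}S_n=0$ almost surely.

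For this last step I would argue as follows. For $j<n$ one has $j^{1/s}<n^{1/s}$, hence $\LL{\abs{\xi_j}>n^{1/s}}\subseteq\LL{\abs{\xi_j}>j^{1/s}}$, which converts the triangular array defining $S_n$ into a genuine sequence. By Hypothesis~\ref{hyp:lc}, $\E{\abs{\xi_0}^s}<\infty$ with $s=2+\delta$, so
\[
\sum_{j\ge 1}\Pr{\abs{\xi_j}>j^{1/s}}=\sum_{j\ge 1}\Pr{\abs{\xi_0}^s>j}\le\E{\abs{\xi_0}^s}<\infty.
\]
The Borel--Cantelli lemma then yields that, almost surely, there is a random index $J\ge 1$ with $\abs{\xi_j}\le j^{1/s}$ for all $j\ge J$. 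On this event of full probability and for $n>J$, every summand of $S_n$ with $J\le j\le n-1$ satisfies $\abs{\xi_j}\le j^{1/s}\le(n-1)^{1/s}<n^{1/s}$ and hence vanishes, so $S_n=\sum_{j=0}^{J-1}\abs{\xi_j}\Ind{\abs{\xi_j}>n^{1/s}}$; this is a sum of a fixed (once $\om$ is fixed) number of terms, each of which converges to $0$ as $n\to\infty$ since $\abs{\xi_j}$ is finite and $n^{1/s}\to\infty$. Therefore $S_n\to 0$ almost surely, which is exactly~\eqref{eq:limite01}.

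The only genuinely delicate point is the triangular-array structure of $S_n$: both the number of summands and the truncation level grow with $n$, so Borel--Cantelli cannot be applied directly. The monotonicity inclusion $\LL{\abs{\xi_j}>n^{1/s}}\subseteq\LL{\abs{\xi_j}>j^{1/s}}$, valid for $j<n$, is what makes the argument go through; and the centering constant $m_n$, which a crude bound would inflate by a factor of order $n$, disappears for free once we restrict attention to $k\ne 0$, which is legitimate thanks to Subsection~\ref{sub:removing}.
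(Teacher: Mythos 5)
Your proof is correct and follows essentially the same path as the paper's: both use the vanishing geometric sum $\sum_{j=0}^{n-1}\om_n^{kj}=0$ for $k\neq 0$ to kill the centering term, bound the difference by $\sum_{j=0}^{n-1}\abs{\xi_j}\Ind{\abs{\xi_j}>n^{1/s}}$, and then apply Borel--Cantelli via the monotonicity inclusion $\{\abs{\xi_j}>n^{1/s}\}\subseteq\{\abs{\xi_j}>j^{1/s}\}$ for $j<n$ together with $\E{\abs{\xi_0}^{2+\delta}}<\infty$. The only cosmetic difference is at the end: the paper chooses $n$ large enough to make the finitely many leading terms vanish exactly, whereas you observe that this fixed finite sum tends to $0$; both are valid.
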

\begin{proof}
Let $k\in \{1,\ldots,n-1\}$ be fixed. Recall that $\om_n^{k} = \exp(\mathsf{i} k \cdot2\pi/n)$. Note $w_n^{k} \neq 1$ and $w_n^{kn}= 1$. Hence the geometric sum $\sum_{j=0}^{n-1} \om_n^{kj}=0$. Then 
\begin{equation}\label{eq:sumazero}
\sum_{j=0}^{n-1} \om_n^{kj}  \widetilde{\xi}^{(n)}_j 
=
\sum_{j=0}^{n-1} \om_n^{kj} \xi_j \Ind{\abs{\xi}\leq n^{1/s}}.
\end{equation}
Indeed,
\begin{equation}
\begin{split}
\sum_{j=0}^{n-1} \om_n^{kj}  \widetilde{\xi}^{(n)}_j  & = \sum_{j=0}^{n-1}  \om_n^{kj}\xi_j\Ind{\abs{\xi_j}\leq n^{1/s}} -  \sum_{j=0}^{n-1} \om_n^{kj}\E{\xi_j\Ind{\abs{\xi_j}\leq n^{1/s}}} \\
& =  \sum_{j=0}^{n-1}  \om_n^{kj}\xi_j\Ind{\abs{\xi_j}\leq n^{1/s}} - \E{\xi_0\Ind{\abs{\xi_0}\leq n^{1/s}}} \sum_{j=0}^{n-1} \om_n^{kj} \\
& =  \sum_{j=0}^{n-1}  \om_n^{kj}\xi_j\Ind{\abs{\xi_j}\leq n^{1/s}}.
\end{split}
\end{equation}
As a consequence of~\eqref{eq:sumazero} and the triangle inequality we have the following estimate
\begin{equation}\label{eq:cotasuperior}
\begin{split}
\abs{\abs{\sum_{j=0}^{n-1} \om_n^{kj}  \xi_j } - \abs{\sum_{j=0}^{n-1} \om_n^{kj}  \widetilde{\xi}^{(n)}_j }} & \leq \abs{\sum_{j=0}^{n-1} \om_n^{kj}  \xi_j - \sum_{j=0}^{n-1} \om_n^{kj}  \widetilde{\xi}^{(n)}_j} \\
& = \abs{\sum_{j=0}^{n-1} \om_n^{kj}  \xi_j - \sum_{j=0}^{n-1} \om_n^{kj} \xi_j \Ind{\abs{\xi_j}\leq n^{1/s}}} \\
& = \abs{\sum_{j=0}^{n-1} \om_n^{kj} \xi_j \Ind{\abs{\xi_j}> n^{1/s}}} \leq \sum_{j=0}^{n-1} \abs{\xi_j} \Ind{\abs{\xi_j}> n^{1/s}}.
\end{split}
\end{equation}
Since $\E{\abs{\xi_0}^s}<\infty$, we have $\sum_{\ell=1}^\infty\Pr{\abs{\xi_0}^s > \ell} <\infty$, (see for instance Theorem~4.26 in \cite{Klenke}).
Hence $\sum_{\ell=1}^\infty \Pr{\abs{\xi_0} > \ell^{1/s}} <\infty$. 
By the Borel--Cantelli Lemma (see Theorem~2.7 item (i) in \cite{Klenke}), we have that 
\[
\mathbb{P}\left(\limsup_{\ell\to\infty} \LL{\abs{\xi_\ell} > \ell^{1/s}}\right)=0.
\] In other words, there exists an event $\Omega^*$ with $\Pr{\Omega^*}=1$ such that for each $v\in\Omega^*$ there is $L(v)\in\N$ satisfying 
\begin{equation}\label{eqn:27041521}
\abs{\xi_\ell(v)}\leq \ell^{1/s}\quad\mbox{ for all } \ell\geq L(v). 
\end{equation}
Let $v\in\Omega^*$ and define $n\geq 1+\max\LL{L(v),\abs{\xi_0(v)}^s,\ldots,\abs{\xi_{L(v)}(v)}^s}$. 
By~\eqref{eqn:27041521} and the definition of $n$, we obtain
\begin{equation}\label{eq:cotazero}
\begin{split}
\sum_{j=0}^{n-1} \abs{\xi_j} \Ind{\abs{\xi}> n^{1/s}} & \leq \sum_{j=0}^{L(v)} \abs{\xi_j} \Ind{\abs{\xi_j}> n^{1/s}} +  \sum_{j=L(v)+1}^{n-1} \abs{\xi_j} \Ind{\abs{\xi_j}> n^{1/s}} \\
& \leq \sum_{j=0}^{L(v)} \abs{\xi_j} \Ind{\abs{\xi_j}> n^{1/s}} +  \sum_{j=L(v)+1}^{n-1} \abs{\xi_j} \Ind{\abs{\xi_j}> j^{1/s}}
= 0.
\end{split}
\end{equation}
Combining~\eqref{eq:cotasuperior} and~\eqref{eq:cotazero} 
we deduce~\eqref{eq:limite01}.
\end{proof}
Next lemma allows us to replace the original array of random variables $\PP{{\xi}_j^{(n)}}_{j\in \{0,\ldots,n-1\}}$ by  array $\PP{\widetilde{\xi}_j^{(n)}}_{j\in \{0,\ldots,n-1\}}$ of bounded random variables defined in~\eqref{eq:defarray} of Lemma~\ref{lem:bounded}.
\begin{lem}[Reduction to the bounded case]\label{lem:acotado}
Assume that Hypothesis~\ref{hyp:lc} is valid.
Let  
\begin{equation}\label{eq:tildeminmax}
\widetilde{\sigma}^{(n)}_{\min} := \min_{k\in\LL{1,\ldots,n-1}}{ \abs{\widetilde{\lambda}^{(n)}_k}}\quad \textrm{ and } \quad \widetilde{\sigma}^{(n)}_{\max} := \max_{k\in\LL{1,\ldots,n-1}}{\abs{\widetilde{\lambda}^{(n)}_k}},
\end{equation}
where $(\widetilde{\lambda}^{(n)}_k)_{k\in \{1,\ldots, n-1\}}$ is defined in~\eqref{e:eigentilde} of 
Lemma~\ref{lem:bounded}. 
Then for any $x,y\in \mathbb{R}$ it follows that
\begin{equation}\label{eqn:eqn200520210851}
\lim\limits_{n\to \infty}\left|
\Pr{\sigma^{(n)}_{\min}\leq  x,\frac{\sigma^{(n)}_{\max}-\fa_n}{\fb_n}\leq y} - \Pr{\widetilde{\sigma}^{(n)}_{\min}\leq  x,\frac{\widetilde{\sigma}^{(n)}_{\max}-\fa_n}{\fb_n}\leq y}\right| = 0.
\end{equation}
\end{lem}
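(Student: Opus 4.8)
The plan is to deduce \eqref{eqn:eqn200520210851} from Lemma~\ref{lem:bounded} together with the elementary observation that $|\lambda^{(n)}_0|$ does not matter (Lemma~\ref{lem:DifZero070520291540}), so that both probabilities in \eqref{eqn:eqn200520210851} can be rewritten, up to an error $\textnormal{o}_n(1)$, in terms of the arrays $(|\lambda^{(n)}_k|)_{k\in\{1,\ldots,n-1\}}$ and $(|\widetilde\lambda^{(n)}_k|)_{k\in\{1,\ldots,n-1\}}$ respectively, using identity \eqref{eq:equiv} and its analogue for the truncated array. First I would fix $x,y\in\R$ and introduce the random quantity
\[
\Delta_n:=\max_{1\leq k\leq n-1}\bigl|\,|\lambda^{(n)}_k|-|\widetilde\lambda^{(n)}_k|\,\bigr|,
\]
which by \eqref{eq:limite01} satisfies $\Delta_n\to 0$ almost surely, hence also in probability. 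On the event $\{\Delta_n\leq\varepsilon\}$ one has the sandwiching inclusions
\[
\bigcap_{k=1}^{n-1}\bigl\{x+\varepsilon<|\widetilde\lambda^{(n)}_k|\leq \fb_n y+\fa_n-\varepsilon\bigr\}
\subseteq
\bigcap_{k=1}^{n-1}\bigl\{x<|\lambda^{(n)}_k|\leq \fb_n y+\fa_n\bigr\}
\subseteq
\bigcap_{k=1}^{n-1}\bigl\{x-\varepsilon<|\widetilde\lambda^{(n)}_k|\leq \fb_n y+\fa_n+\varepsilon\bigr\},
\]
and likewise for the intersections without the lower constraint $x$. Therefore, splitting according to whether $\Delta_n\leq\varepsilon$ or not, the difference of the two probabilities in \eqref{eqn:eqn200520210851} is bounded by $\Pr{\Delta_n>\varepsilon}$ plus the difference between the distribution function of the truncated vector evaluated at nearby points $(x\pm\varepsilon,\ y\pm\varepsilon/\fb_n)$.

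Next I would pass to the limit. Since $\Pr{\Delta_n>\varepsilon}\to 0$ for every fixed $\varepsilon>0$, it suffices to control the oscillation of
\[
n\mapsto \Pr{\widetilde\sigma^{(n)}_{\min}\leq x+\varepsilon,\ \frac{\widetilde\sigma^{(n)}_{\max}-\fa_n}{\fb_n}\leq y+\varepsilon'}
\]
between the arguments shifted by $\pm\varepsilon$ (note $\varepsilon/\fb_n\to 0$, so the shift in the $y$-variable is itself $\textnormal{o}_n(1)$ and can be absorbed after the next subsections are in place; for the present lemma I only need that the two probabilities differ by something tending to zero). The cleanest route, which avoids quantitative control of the truncated law at this stage, is to run the argument in the other direction: rather than comparing the truncated law at shifted points, I would simply record that for \emph{every} fixed $\varepsilon>0$,
\[
\limsup_{n\to\infty}\Bigl|\Pr{\sigma^{(n)}_{\min}\leq x,\ \tfrac{\sigma^{(n)}_{\max}-\fa_n}{\fb_n}\leq y}-\Pr{\widetilde\sigma^{(n)}_{\min}\leq x,\ \tfrac{\widetilde\sigma^{(n)}_{\max}-\fa_n}{\fb_n}\leq y}\Bigr|
\]
is at most the limsup of the truncated-law increment over an $\varepsilon$-window plus $\limsup_n\Pr{\Delta_n>\varepsilon}=0$, and then let $\varepsilon\downarrow 0$. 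This works provided the family of distribution functions of the truncated vectors is asymptotically equicontinuous at $(x,y)$; but that is not yet known independently of the very theorem we are proving.

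To sidestep this circularity I would instead use the following standard fact, which I will state as a lemma (analogous to Lemma~\ref{lem:doublelimit} in Appendix~\ref{ap:tools}): if $U_n,V_n$ are real random variables with $|U_n-V_n|\to 0$ in probability, and if $F_{V_n}$ converges to a distribution function $F$ at every continuity point of $F$, then $F_{U_n}$ converges to $F$ at every continuity point of $F$ as well; and the bivariate version thereof. Applied with $V_n=\bigl(\widetilde\sigma^{(n)}_{\min},(\widetilde\sigma^{(n)}_{\max}-\fa_n)/\fb_n\bigr)$ and $U_n=\bigl(\sigma^{(n)}_{\min},(\sigma^{(n)}_{\max}-\fa_n)/\fb_n\bigr)$, whose difference has sup-norm bounded by $(\Delta_n,\Delta_n/\fb_n)\to(0,0)$ in probability by \eqref{eq:limite01} and $\fb_n\to\infty$ from \eqref{eq:defab}, this shows that the two vectors have the \emph{same} set of subsequential limit laws. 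In particular \eqref{eqn:eqn200520210851} follows once one knows (as will be established in the subsequent subsections) that the truncated vector converges in distribution; but in fact the cleaner phrasing, and the one I would adopt, is that \eqref{eqn:eqn200520210851} holds \emph{unconditionally} because the bivariate distribution functions of $U_n$ and $V_n$ differ by at most $\Pr{\max(\Delta_n,\Delta_n/\fb_n)>\varepsilon}$ plus the $U_n$-probability of the symmetric difference of two boxes whose side-lengths differ by $2\varepsilon$, and then one lets $n\to\infty$ and $\varepsilon\downarrow 0$, invoking that $\Pr{\,x-\varepsilon<\sigma^{(n)}_{\min}\leq x+\varepsilon\,}$ and the analogous $\sigma^{(n)}_{\max}$-increments are handled by the same Borel--Cantelli/CLT inputs already used in Lemma~\ref{lem:DifZero070520291540}. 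The main obstacle is precisely this last point: making the comparison genuinely non-circular, i.e.\ bounding the increment of the \emph{original} (untruncated) law over a shrinking window without presupposing the limit theorem — which is why I would isolate it into the ``closeness in probability implies same limit points'' lemma rather than trying to prove a quantitative modulus-of-continuity estimate by hand.
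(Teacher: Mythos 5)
You have correctly identified the ingredients (Lemma~\ref{lem:DifZero070520291540}, identity~\eqref{eq:equiv}, Lemma~\ref{lem:bounded}) and, to your credit, you have also correctly noticed that the bare statement ``$\Delta_n \to 0$ a.s.'' does \emph{not} by itself let you compare $\Pr{A_n}$ and $\Pr{B_n}$, because the sets $A_n$, $B_n$ are intersections over a growing number of constraints and the boundary windows shrink with $n$. That observation is why you end up chasing a modulus-of-continuity estimate for the (truncated or untruncated) law, spotting the circularity, and ultimately not closing the argument.

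The missed idea is that the proof of Lemma~\ref{lem:bounded} establishes something much stronger than $\Delta_n\to 0$ a.s.: the chain of inequalities~\eqref{eq:cotasuperior}--\eqref{eq:cotazero} shows that, on the full-measure event $\Omega^*$, for each $v\in\Omega^*$ there is $N(v)$ such that for all $n\geq N(v)$,
\[
\sum_{j=0}^{n-1}\abs{\xi_j(v)}\Ind{\abs{\xi_j(v)}>n^{1/s}}=0,
\qquad\text{hence}\qquad
\abs{\lambda^{(n)}_k(v)}=\abs{\widetilde{\lambda}^{(n)}_k(v)}\ \text{ for all } k\in\{1,\ldots,n-1\}.
\]
In other words $\Delta_n(v)=0$ for all $n\geq N(v)$, not merely $\Delta_n(v)\to 0$. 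Consequently, with
$A_n:=\bigcap_{k=1}^{n-1}\LL{x<\abs{\lambda^{(n)}_k}\leq \fb_n y+\fa_n}$ and
$B_n:=\bigcap_{k=1}^{n-1}\LL{x<\abs{\widetilde{\lambda}^{(n)}_k}\leq \fb_n y+\fa_n}$, one has $\mathds{1}_{A_n}(v)=\mathds{1}_{B_n}(v)$ for all $n\geq N(v)$, so $\mathds{1}_{A_n}-\mathds{1}_{B_n}\to 0$ a.s.\ pointwise. Dominated convergence then gives $\abs{\Pr{A_n}-\Pr{B_n}}\to 0$ directly, with no need for $\varepsilon$-sandwiching of the boxes, no equicontinuity of the laws, and no appeal to later lemmas. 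The same applies to the intersections without the lower constraint $x$, and combining with Lemma~\ref{lem:DifZero070520291540} and~\eqref{eq:equiv} finishes the proof. Your Slutsky-flavoured workaround is not wrong, but it reorders the logical dependencies (it would require first establishing the limit law for the truncated vector, whereas the paper wants this reduction lemma to stand on its own), and in any case you do not actually carry it to completion; the crisp eventual-equality fact from the proof of Lemma~\ref{lem:bounded} is what makes the lemma a two-line consequence.
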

\begin{proof}
For each $n\in \mathbb{N}$ let
\[A_n:=\bigcap_{k=1}^{n-1} \LL{x < \abs{\lambda^{(n)}_k}\leq \fb_n y + \fa_n}\quad \textrm{ and }\quad B_n:=\bigcap_{k=1}^{n-1} \LL{x < \abs{\widetilde{\lambda}^{(n)}_k}\leq \fb_n y + \fa_n}.\] 
Lemma~\ref{lem:bounded} implies that almost surely
$\mathds{1}_{A_n} - \mathds{1}_{B_n}\to 0$, as $n\to \infty$.
Hence, the Dominated Convergence Theorem yields
\begin{equation}\label{eqn:reduction030520211656}
\lim\limits_{n\to \infty}\left|\Pr{A_n} - \Pr{B_n}\right|=0.
\end{equation}
The preceding limit with the help of 
Lemma~\ref{lem:DifZero070520291540} and~\eqref{eq:equiv} and implies~\eqref{eqn:eqn200520210851}.
\end{proof}
\subsection{\textbf{Smoothness}}\label{sec:smooth}
In this subsection we introduce a Gaussian perturbation  in order to smooth the random variables 
$(\widetilde{\xi}^{(n)}_j)_{j\in \{1,\ldots,n-1\}}$ defined in~\eqref{eq:defarray} of Lemma~\ref{lem:bounded}.
Let $(N_j)_{j\in \{1,\ldots,n-1\}}$ be an array of random variables with standard Gaussian distribution. Let $(\fs_n)_{n\in \mathbb{N}}$ be a deterministic sequence of positive numbers satisfying
$\fs_n\to 0$ as $n\to \infty$ 
in a way that
\begin{equation}\label{eq:loquedan}
\lim\limits_{n\to \infty}  \frac{\fs_n\fb_n}{\sqrt{n}}=\lim\limits_{n\to \infty}  \frac{\fs_n\fa_n}{\sqrt{n}}=0.
\end{equation}
This is precise in~\eqref{eq:defsn2} below.
We anticipate that $\fs_n\approx n^{-\theta}$ for a suitable positive exponent $\theta$.
We define the array $(\gamma_k)_{k\in \{1,\ldots,n-1\}}$ as follows
\begin{equation}\label{eq:gammadef}
\gamma^{(n)}_k := \sum_{j=0}^{n-1} \om^{kj}_n \fs_n N_j = \fs_n\sum_{j=0}^{n-1} \om^{kj}_n  N_j\quad\textrm{ for }\quad k\in \{1,\ldots,n-1\}.
\end{equation}
Then we set 
\begin{equation}\label{eq:sigmaminmax}
\begin{split}
&\beta_n:=\sqrt{\frac{2}{n}},\quad
\displaystyle\sigma^{(n,\NN)}_{\min} := \beta_n\min_{1\leq k\leq n-1}\LL{ \abs{\widetilde{\lambda}^{(n)}_k + \gamma^{(n)}_k}},\quad
\displaystyle\sigma^{(n,\NN)}_{\max}:= \beta_n\max_{1\leq k\leq n-1}\LL{ \abs{\widetilde{\lambda}^{(n)}_k + \gamma^{(n)}_k}},
\end{split}
\end{equation}
where $(\widetilde{\lambda}^{(n)}_k)_{k\in\{1,\ldots,n-1\}}$ is defined in~\eqref{e:eigentilde} of Lemma~\ref{lem:bounded}.
\begin{lem}\label{lem:lem200520210925}
Assume that Hypothesis~\ref{hyp:lc} is valid.
 Then 
 for any $x,y\in \mathbb{R}$ 
and a suitable sequence $(\fs_n)_{n\in \mathbb{N}}$ that tends to zero as $n\to \infty$,
 it follows that
\begin{equation}\label{eq:limitesmooth}
\lim\limits_{n\to \infty}
\Pr{\widetilde{\sigma}^{(n)}_{\min}\leq x,\;  \widetilde{\sigma}^{(n)}_{\max} \leq \fb_n y + \fa_n} =R(x)G(y),
\end{equation}
where $\widetilde{\sigma}^{(n)}_{\min}$ and $\widetilde{\sigma}^{(n)}_{\max}$ are given 
in~\eqref{eq:tildeminmax} of Lemma~\ref{lem:acotado} and $R,G$ are defined in Theorem~\ref{thm:generalcase}.
\end{lem}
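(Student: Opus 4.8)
The plan is to run the Davis--Mikosch/Einmahl strategy: first replace the bounded, smoothed partial sums by a genuine Gaussian vector via a multivariate KMT-type (Einmahl) approximation, then reduce the limiting distribution computation to the Gaussian case already analyzed in Proposition~\ref{thm:Gaussiancase}, while controlling the error from the Gaussian smoothing term $\gamma^{(n)}_k$. More concretely, I would proceed as follows.

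First, I would introduce the real and imaginary parts $\widetilde c_{k,n}=\sum_{j=0}^{n-1}\widetilde\xi^{(n)}_j\cos(2\pi kj/n)$ and $\widetilde s_{k,n}=\sum_{j=0}^{n-1}\widetilde\xi^{(n)}_j\sin(2\pi kj/n)$, so that $|\widetilde\lambda^{(n)}_k|^2=\widetilde c_{k,n}^2+\widetilde s_{k,n}^2$, and note that this is a partial-sum process in the $(2n)$-dimensional (or, after discarding redundancies, $\sim n$-dimensional) vectors with i.i.d. increments whose entries are bounded by $Cn^{1/s}$ (by~\eqref{eq:defarray}), have mean zero, and covariance $\tfrac n2\,\mathrm{Id}$ up to the conjugacy relations. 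I would then invoke Einmahl's multivariate KMT approximation \cite{Einmahl} to construct, on a possibly enlarged probability space, a Gaussian vector $(\widehat c_{k,n},\widehat s_{k,n})_{1\le k\le q_n}$ with the matching covariance structure — i.e. exactly the Gaussian circulant eigenvalue structure of~\eqref{eq:distribution9}--\eqref{eq:distribution99} — such that $\max_k\big|\,|\widetilde\lambda^{(n)}_k|-|\widehat\lambda^{(n)}_k|\,\big|$ is, with probability tending to one, smaller in order than $\fb_n$ (this is where the boundedness by $n^{1/s}$ and the logarithmic scale $\fb_n\asymp\sqrt{n/\ln n}$ must be balanced — the allowed error in the Gumbel/Rayleigh scaling is of order $\fb_n$ around $\fa_n\asymp\sqrt{n\ln n}$, and of order $1$ near $x=O(1)$).

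Second, I would handle the smoothing term. The role of $\gamma^{(n)}_k$ in~\eqref{eq:gammadef} is to make the perturbed vector $\widetilde\lambda^{(n)}_k+\gamma^{(n)}_k$ literally absolutely continuous with a Gaussian component of scale $\fs_n$, which is needed for the KMT/Einmahl step to apply to a smooth target and for anticoncentration when bounding $\Pr{x<|\cdot|\le x+\epsilon}$ type terms; because $\fs_n\to 0$ and~\eqref{eq:loquedan} holds, a triangle-inequality argument together with $\max_k|\gamma^{(n)}_k|=\mathrm{O}_{\mathbb P}(\fs_n\sqrt{n\ln n})=\mathrm{o}_{\mathbb P}(\fb_n)$ shows that adding (or removing) $\gamma^{(n)}_k$ does not change the normalized min/max in the limit. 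After these two replacements, $\Pr{\widetilde\sigma^{(n)}_{\min}\le x,\ \widetilde\sigma^{(n)}_{\max}\le\fb_n y+\fa_n}$ differs by $\mathrm{o}_n(1)$ from the corresponding probability for $\beta_n^{-1}\sigma^{(n,\NN)}_{\min},\beta_n^{-1}\sigma^{(n,\NN)}_{\max}$, which in turn — by the covariance-matching of the Gaussian target and the already-proven Proposition~\ref{thm:Gaussiancase} (the computation~\eqref{eq:prodct} goes through verbatim because the distributions in~\eqref{eq:distribution9}--\eqref{eq:distribution99} are exactly the ones produced) — converges to $R(x)G(y)$. Here one uses $\lim_n \sqrt{2/n}(\fb_n y+\fa_n)^2/n\cdot(\ldots)$ as in the Gaussian proof, and the asymptotic independence of the $\chi_1$/first coordinate (already removed via Lemma~\ref{lem:DifZero070520291540}).

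The main obstacle is the quantitative KMT/Einmahl step: one must verify that Einmahl's theorem applies with increments that are only bounded by $n^{1/s}$ (not uniformly bounded) and in dimension growing like $n$, and that the resulting approximation error $\Delta_n$ satisfies $\Delta_n=\mathrm{o}_{\mathbb P}(\fb_n)$ — equivalently $\Delta_n/\sqrt{n/\ln n}\to 0$ — which is exactly the point where the exponent $s=2+\delta$ in Hypothesis~\ref{hyp:lc} and the precise choice $\fs_n\approx n^{-\theta}$ in~\eqref{eq:defsn2} enter and must be tuned; I would also need the standard partitioning/blocking trick (à la Davis--Mikosch) to reduce the $n$ correlated frequencies to asymptotically independent blocks before the Gaussian comparison, and a Bonferroni-type estimate to pass from the joint event over all $k$ to the product form. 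Once $\Delta_n=\mathrm{o}_{\mathbb P}(\fb_n)$ is established, the remaining steps are routine applications of the Continuous Mapping Theorem, Slutsky's Theorem, and the explicit Gaussian computation of Proposition~\ref{thm:Gaussiancase}.
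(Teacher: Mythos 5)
Your plan follows the paper's own route: sandwich the smoothed and unsmoothed extremal eigenvalues via a triangle-inequality bound in terms of $\max_\ell\abs{\gamma^{(n)}_\ell}$, invoke the Einmahl--KMT lemma (the paper uses Lemma~3.4 of Davis--Mikosch, restated as Lemma~\ref{lem:NormalApprox}) together with a Bonferroni argument for the Gaussian target, and conclude via Slutsky. The paper itself splits this work: Lemma~\ref{lem:lem200520210925} is exactly the sandwich step that compares $\widetilde{\sigma}^{(n)}_{\min},\widetilde{\sigma}^{(n)}_{\max}$ with $\sigma^{(n,\NN)}_{\min},\sigma^{(n,\NN)}_{\max}$, while the KMT/Bonferroni computation is deferred to the separate Lemma~\ref{lem:noexiste}; you merge both, which is fine in spirit, but the sandwich is where the trouble lies.

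There is a genuine gap in the key estimate you use to remove the smoothing: you claim that $\max_k\abs{\gamma^{(n)}_k}=\mathrm{O}_{\mathbb P}(\fs_n\sqrt{n\ln n})=\mathrm{o}_{\mathbb P}(\fb_n)$ ``shows that adding (or removing) $\gamma^{(n)}_k$ does not change the normalized min/max in the limit.'' This estimate controls the \emph{maximum}, whose limiting fluctuations around $\fa_n$ are on scale $\fb_n$, but it does \emph{not} control the \emph{minimum}. The minimum $\widetilde{\sigma}^{(n)}_{\min}$ converges to a Rayleigh law \emph{without any rescaling}, i.e. it lives on scale $\mathrm{O}(1)$, not $\mathrm{O}(\fb_n)$. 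For a sandwich of the form
\[
\min_k\abs{\widetilde{\lambda}^{(n)}_k+\gamma^{(n)}_k}-\max_\ell\abs{\gamma^{(n)}_\ell}\;\le\;\widetilde{\sigma}^{(n)}_{\min}\;\le\;\min_k\abs{\widetilde{\lambda}^{(n)}_k+\gamma^{(n)}_k}+\max_\ell\abs{\gamma^{(n)}_\ell}
\]
to close, one would need $\max_\ell\abs{\gamma^{(n)}_\ell}=\mathrm{o}_{\mathbb P}(1)$. But with the choice~\eqref{eq:defsn2}, $\fs_n\asymp n^{-\alpha/2}$ with $\alpha=(\delta/2+\eta)/(2+\delta)\in(0,1)$, and hence
\[
\max_\ell\abs{\gamma^{(n)}_\ell}\asymp \fs_n\fa_n\asymp n^{1/2-\alpha/2}\sqrt{\ln n}\longrightarrow\infty,
\]
which is $\mathrm{o}_{\mathbb P}(\fb_n)$ as you say, but certainly not $\mathrm{o}_{\mathbb P}(1)$. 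In particular, your argument would show that the outer event $\LL{\min_k\abs{\widetilde{\lambda}^{(n)}_k+\gamma^{(n)}_k}\le x+\max_\ell\abs{\gamma^{(n)}_\ell}}$ has probability tending to $1$ (not to $R(x)$), so the squeeze degenerates. Moreover, this cannot be repaired simply by taking $\fs_n$ smaller: the constraint $q_n^2\ge n^{-2c}\ln n$ in Lemma~\ref{lem:NormalApprox} forces $\fs_n^2\gtrsim n^{-1+2(1-\eta)/(2+\delta)}\ln n$, which always makes $\fs_n\sqrt n\to\infty$. The missing idea, therefore, is an argument that removes the smoothing in a way that is sharp on the $\mathrm{O}(1)$ scale of the minimum --- for instance a direct distributional comparison, exploiting the exact Gaussian structure of the smoothed variables and the fact that the variance inflation $1+\fs_n^2\to 1$, rather than a worst-case triangle-inequality bound in terms of $\max_\ell\abs{\gamma^{(n)}_\ell}$.
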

\begin{proof}
Let $k\in \{1,\ldots,n-1\}$. The triangle inequality yields
\begin{equation}\label{eqn:eqn120520211539}
\abs{\widetilde{\lambda}^{(n)}_k + \gamma^{(n)}_k} - \max_{1\leq \ell\leq n-1}{\abs{\gamma^{(n)}_\ell}} \leq \abs{\widetilde{\lambda}^{(n)}_k} \leq \abs{\widetilde{\lambda}^{(n)}_k + \gamma^{(n)}_k} + \max_{1\leq \ell\leq n-1}{\abs{\gamma^{(n)}_\ell}}.
\end{equation} 
By~\eqref{eqn:eqn120520211539} we have
\begin{equation}\label{eqn:eqn120520211624}
\begin{split}
    & \LL{ \sigma^{(n,\NN)}_{\min} + \beta_n \max_{1\leq \ell\leq n-1}\abs{\gamma^{(n)}_\ell} \leq \beta_n x,\; \sigma^{(n,\NN)}_{\max} + \beta_n \max_{1\leq \ell\leq n-1}{\abs{\gamma^{(n)}_\ell}}\leq \beta_n (\fb_n y + \fa_n) } \\
    & \quad \subset \LL{ \sigma^{(n,\NN)}_{\min} + \beta_n \max_{1\leq \ell\leq n-1}\abs{\gamma^{(n)}_\ell} \leq \beta_n x,\; \beta_n \widetilde{\sigma}^{(n)}_{\max} \leq \beta_n (\fb_n y + \fa_n) } \\
    & \quad \subset \LL{\beta_n  \widetilde{\sigma}^{(n)}_{\min}\leq \beta_n x,\;  \beta_n \widetilde{\sigma}^{(n)}_{\max} \leq \beta_n (\fb_n y + \fa_n)} \\
     & \quad = \LL{ \widetilde{\sigma}^{(n)}_{\min}\leq \ x,\;   \widetilde{\sigma}^{(n)}_{\max} \leq  (\fb_n y + \fa_n)} \\
    & \quad \subset \LL{ \sigma^{(n,\NN)}_{\min} - \beta_n \max_{1\leq \ell\leq n-1}\abs{\gamma^{(n)}_\ell} \leq \beta_n x,\; \beta_n \widetilde{\sigma}^{(n)}_{\max} \leq \beta_n (\fb_n y + \fa_n) } \\
    & \quad \subset \LL{ \sigma^{(n,\NN)}_{\min} - \beta_n \max_{1\leq \ell\leq n-1}\abs{\gamma^{(n)}_\ell} \leq \beta_n x,\; \sigma^{(n,\NN)}_{\max} - \beta_n \max_{1\leq \ell\leq n-1}\abs{\gamma^{(n)}_{\ell}}\leq \beta_n (\fb_n y + \fa_n) },
\end{split}
\end{equation}
where 
$\beta_n$,
$\sigma^{(n,\NN)}_{\min}$ and $\sigma^{(n,\NN)}_{\max}$ are defined in~\eqref{eq:sigmaminmax}.
We claim that
\begin{equation}\label{eq:maxzero}
\beta_n
\max_{1\leq \ell\leq n-1}\abs{\gamma^{(n)}_\ell}\longrightarrow 0, \quad \textrm{ in distribution, } \quad \textrm{ as }\quad n\to \infty.
\end{equation}
Indeed, by
Proposition~\ref{thm:Gaussiancase},  Slutsky's Theorem and~\eqref{eq:loquedan}
we have 
\begin{equation}\label{eq:limitZ}
\begin{split}
\beta_n
\max_{1\leq \ell\leq n-1}\abs{\gamma^{(n)}_\ell}
=
\fs_n\beta_n\fb_n\frac{\frac{1}{\fs_n}
\max\limits_{1\leq \ell\leq n-1}\abs{\gamma^{(n)}_\ell}-\fa_n}{\fb_n}+\fs_n\beta_n \fa_n
\longrightarrow 0, 
\end{split}
\end{equation}
in distribution,  as $n\to \infty$.
The limit~\eqref{eq:limitZ} can be also {deduced} from (1.1), p. 522 in \cite{DavisMikosh} or Chapter~10 in \cite{Brockwell}.

Now, we have the necessary elements to conclude the proof of Lemma~\ref{lem:lem200520210925}.
By Lemma~\ref{lem:noexiste} we have
\begin{equation}\label{eq:perturbadas}
\lim\limits_{n\to \infty}
\Pr{ \sigma^{(n,\NN)}_{\min} \leq \beta_n x,\; \sigma^{(n,\NN)}_{\max} \leq \beta_n(\fb_n y + \fa_n)}=R(x)G(y)\quad \textrm{ for any } x\geq 0,~y\in \mathbb{R}.
\end{equation}
By~\eqref{eq:maxzero} and~\eqref{eq:perturbadas} with the help of the  Slutsky Theorem  we deduce 
\begin{equation}
\begin{split}
&\lim\limits_{n\to \infty}
\Pr{ \sigma^{(n,\NN)}_{\min} - \beta_n\max_{1\leq \ell\leq n-1}\abs{\gamma^{(n)}_\ell} \leq \beta_n x,\; \sigma^{(n,\NN)}_{\max} - \beta_n \max_{1\leq \ell\leq n-1}\abs{\gamma^{(n)}_{\ell}}\leq \beta_n(\fb_n y + \fa_n) }\\
&=\lim\limits_{n\to \infty}
\Pr{ \sigma^{(n,\NN)}_{\min} + \beta_n\max_{1\leq \ell\leq n-1}\abs{\gamma^{(n)}_\ell} \leq \beta_n x,\; \sigma^{(n,\NN)}_{\max} + \beta_n \max_{1\leq \ell\leq n-1}\abs{\gamma^{(n)}_{\ell}}\leq \beta_n(\fb_n y + \fa_n) }\\
& =R(x)G(y)\quad \textrm{ for any } x\geq 0,~y\in \mathbb{R}.
\end{split}
\end{equation}
The preceding limit with the help 
of~\eqref{eqn:eqn120520211624} implies
\[
\lim\limits_{n\to \infty}\Pr{\widetilde{\sigma}^{(n)}_{\min}\leq x,\;  \widetilde{\sigma}^{(n)}_{\max} \leq \fb_n y + \fa_n}=R(x)G(y)\quad \textrm{ for any } x\geq 0,~y\in \mathbb{R}.
\]
As a consequence we conclude~\eqref{eq:limitesmooth}.
\end{proof}
\subsection{\textbf{Bounded and smooth case}}\label{sec:boundedsmooth}
In the subsection we prove the following lemma.
\begin{lem}[Gaussian approximation for the bounded and smooth case]\label{lem:noexiste}  
Assume that Hypothesis~\ref{hyp:lc} is valid.
Let $\beta_n$, $\sigma^{(n,\NN)}_{\min}$ and $\sigma^{(n,\NN)}_{\max}$ being as in~\eqref{eq:sigmaminmax}.
 Then 
 for any $x,y\in \mathbb{R}$  and a suitable sequence $(\fs_n)_{n\in \mathbb{N}}$ that tends to zero as $n\to \infty$, 
 it follows that
\begin{equation}\label{eq:perturbadaslem}
\lim\limits_{n\to \infty}
\Pr{ \sigma^{(n,\NN)}_{\min} \leq \beta_n x,\; \sigma^{(n,\NN)}_{\max} \leq \beta_n(\fb_n y + \fa_n)}=R(x)G(y)\quad \textrm{ for any } x\geq 0,~y\in \mathbb{R},
\end{equation}
where $R$ and $G$ are defined in 
Theorem~\ref{thm:generalcase}.
\end{lem}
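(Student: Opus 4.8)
The plan is to use the Davis--Mikosch method: approximate the bounded, smooth partial-sum process
$\bigl(\sum_{j=0}^{n-1}(\widetilde\xi^{(n)}_j+\fs_n N_j)\cos(2\pi kj/n),\ \sum_{j=0}^{n-1}(\widetilde\xi^{(n)}_j+\fs_n N_j)\sin(2\pi kj/n)\bigr)_{k}$ by a Gaussian analogue via the multivariate Koml\'os--Major--Tusn\'ady (KMT) strong approximation of Einmahl~\cite{Einmahl}, and then transfer the conclusion of Proposition~\ref{thm:Gaussiancase} (the exact Gaussian computation) to the approximating process. First I would write $\abs{\widetilde\lambda^{(n)}_k+\gamma^{(n)}_k}^2=\widetilde c_{k,n}^2+\widetilde s_{k,n}^2$, where $\widetilde c_{k,n},\widetilde s_{k,n}$ are the smoothed cosine/sine sums above, and collect the relevant moment data: the summands $\eta_j:=\widetilde\xi^{(n)}_j+\fs_n N_j$ are independent, mean zero, with variance $1+o(1)$ (since truncation changes the variance by $o(1)$ and $\fs_n\to0$), and are bounded by $\approx n^{1/s}+\fs_n|N_j|$. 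The covariance structure of the vector $n^{-1/2}(\widetilde c_{k,n},\widetilde s_{k,n})_{1\le k\le n-1}$ matches that of the genuinely Gaussian vector appearing in Proposition~\ref{thm:Gaussiancase} up to an $o(1)$ error uniformly in $k$, by the orthogonality relations (i)--(iii) there.

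The core step is the KMT/Einmahl coupling: on a suitable probability space one constructs i.i.d.\ standard Gaussian vectors so that the partial sums $S_m=\sum_{j=0}^{m-1}\eta_j(\cos,\sin)$ are matched by a Brownian-type Gaussian process with an error that is controlled polynomially, using the $(2+\delta)$-th moment bound from Hypothesis~\ref{hyp:lc} together with the truncation level $n^{1/s}$, $s=2+\delta$. This is exactly the mechanism used to prove Theorem~2.1 of~\cite{DavisMikosh}. The point is that the maximal discrepancy $\max_{1\le k\le n-1}\bigl|\,|\widetilde\lambda^{(n)}_k+\gamma^{(n)}_k|-|\lambda^{(n),\mathrm{Gauss}}_k|\,\bigr|$, once rescaled by $\beta_n$ and measured on the scale $\fb_n$ relevant for the maximum and on the scale $O(1)$ relevant for the minimum, tends to $0$ in probability. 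Here the smoothing parameter $\fs_n\approx n^{-\theta}$ enters: it must be small enough that $\fs_n\fb_n/\sqrt n\to0$ and $\fs_n\fa_n/\sqrt n\to0$ (condition~\eqref{eq:loquedan}), yet the Gaussian perturbation is what makes the density of $\eta_j$ smooth enough for the multivariate KMT bound to apply with the required rate; choosing $\theta$ in the correct range (made precise in the reference to~\eqref{eq:defsn2}) balances these two demands.

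Given the coupling, I would finish as follows. Write the target probability using inclusion--exclusion exactly as in~\eqref{eq:prodct}, namely
$\Pr{\sigma^{(n,\NN)}_{\min}\le\beta_n x,\ \sigma^{(n,\NN)}_{\max}\le\beta_n(\fb_n y+\fa_n)}=\Pr{\bigcap_k\{|\widetilde\lambda^{(n)}_k+\gamma^{(n)}_k|\le\fb_n y+\fa_n\}}-\Pr{\bigcap_k\{x<|\widetilde\lambda^{(n)}_k+\gamma^{(n)}_k|\le\fb_n y+\fa_n\}}$, and compare each term with the corresponding term for the Gaussian circulant matrix. By the strong approximation, each comparison is $o_n(1)$ (one absorbs the KMT error into the thresholds $x\mapsto x\pm\varepsilon_n$, $\fb_n y+\fa_n\mapsto \fb_n y+\fa_n\pm\fb_n\varepsilon_n'$, then uses continuity of $R$ and $G$ and the already-established Proposition~\ref{thm:Gaussiancase} to let $n\to\infty$). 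Combining, the limit equals $R(x)G(y)$, which is~\eqref{eq:perturbadaslem}. The main obstacle is the second step: verifying that Einmahl's multivariate KMT approximation applies uniformly over all $n-1$ frequencies with a rate strong enough to beat the $\fb_n$-scaling of the maximum (this is where the Lyapunov moment, the truncation threshold $n^{1/s}$, and the Gaussian smoothing $\fs_n$ must be reconciled), precisely as in the proof of Theorem~2.1 in~\cite{DavisMikosh}; everything else is bookkeeping with Slutsky's theorem, the Continuous Mapping Theorem, and the elementary limits $x^2/n\to0$, $(\fb_n y+\fa_n)^2/n\to\infty$.
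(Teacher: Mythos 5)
Your proposal correctly identifies the general framework (Davis--Mikosch via a KMT/Einmahl-type Gaussian approximation), but it misidentifies the mechanism and therefore has a genuine gap at the crucial step.

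First, you propose to build a strong coupling on one probability space so that the maximal discrepancy $\max_{1\le k\le n-1}\bigl||\widetilde\lambda^{(n)}_k+\gamma^{(n)}_k|-|\lambda^{(n),\mathrm{Gauss}}_k|\bigr|$ is negligible after rescaling, and then transfer Proposition~\ref{thm:Gaussiancase}. This step is asserted but not established, and it is exactly the step that does not follow from the Einmahl approximation: Einmahl gives a coupling for a \emph{fixed} $2d$-dimensional projection $\bigl(\cos(w_{i_1}\ell),\sin(w_{i_1}\ell),\dots,\cos(w_{i_d}\ell),\sin(w_{i_d}\ell)\bigr)$, with an error rate whose implicit constants depend on $d$. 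Here the maximum runs over all $q_n\sim n/2$ Fourier frequencies simultaneously, so the relevant dimension grows with $n$ and the coupling error cannot be made $o(\fb_n)$ uniformly. That is precisely why Davis--Mikosch, and the present paper, do \emph{not} couple the whole array; they instead apply the local limit theorem of Lemma~\ref{lem:NormalApprox} (Lemma~3.4 of \cite{DavisMikosh}) to the \emph{density} of the $2d$-dimensional vector for a fixed $d$, and then lift this to the $q_n$-fold maximum through the Bonferroni inequalities~\eqref{eqn:Bonferroniine} by first sending $n\to\infty$ with $d$ fixed (obtaining the limit~\eqref{eq:qndlimit}) and then sending $d\to\infty$.

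Second, your final step claims to ``write the target probability using inclusion--exclusion exactly as in~\eqref{eq:prodct}.'' But~\eqref{eq:prodct} is the product formula of Lemma~\ref{lem:080720201357}, which requires \emph{independence} of the $|\lambda^{(n)}_j|$; that independence is special to the Gaussian case and fails for $(\widetilde\lambda^{(n)}_k+\gamma^{(n)}_k)_k$ at any finite $n$. The actual proof replaces this by the identity~\eqref{eqn:eqn251020211045} (which needs no independence), rewrites each term as $1-\Pr{\bigcup_k A^{(n)}_k}$, and controls the union via Bonferroni, where the $d$-fold intersection probabilities are approximated by Gaussian ones using Lemma~\ref{lem:NormalApprox} together with Lemma~\ref{lem:2Ray}, the Stirling asymptotics for $\binom{q_n}{d}$, and $\fs_n\to 0$. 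So the route you describe is not the paper's route, and the two places where it deviates --- the uniform-over-frequencies coupling and the product-form inclusion--exclusion --- are exactly the places where it would break down.
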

We follow the ideas from \cite{DavisMikosh}.
To prove Lemma~\ref{lem:noexiste} we introduce some notation and the so-called Einmahl--Koml\'os--Major--Tusn\'ady approximation.
For each $d\in \mathbb{N}$ and indexes $i_j\in \{1,\ldots,n-1\}$, $j=1,\ldots,d$ we 
define the Fourier frequencies  $w_{i_j} = \frac{2\pi i_j}{n}$, $j=1,\ldots,d$ and then the vector 
\begin{equation}\label{eqn:FourierFreq}
v_d(\ell) = \PP{ \cos(w_{i_1}\ell), \sin(w_{i_1}\ell),\ldots, \cos(w_{i_d}\ell), \sin(w_{i_d}\ell) }^{\textnormal{T}}\quad \textrm{ for any }\quad \ell\in \mathbb{N}_0,
\end{equation} 
where 
${\textnormal{T}}$ denotes the transpose operator.
The next lemma is  the main tool in the proof of 
Lemma~\ref{lem:noexiste}. It allows us to reduce the problem to a perturbed Gaussian case. 
\begin{lem}[Lemma~3.4 of \cite{DavisMikosh}]\label{lem:NormalApprox}
Let $d,n\in \mathbb{N}$ and denote by $\widetilde{p}_n$ the continuous density function of  the random vector
\[
2^{1/2}n^{-1/2} \sum_{\ell=0}^{n-1}\PP{\widetilde{\xi}_{\ell}^{(n)} + s_n N_\ell} v_d(\ell),
\] where $\PP{N_\ell}_{t\in \mathbb{N}_0}$ is a sequence of i.i.d. standard Gaussian  random variables, independent of the sequence 
$\PP{\widetilde{\xi}_{\ell}^{(n)}}_{\ell \in \{0,\ldots,n-1\}}$ that is defined in~\eqref{eq:defarray} of Lemma~\ref{lem:bounded}, and $s_n^2 = \textnormal{Var}\PP{\widetilde{\xi}_0^{(n)}}q_n^2$. If 
\[
n^{-2c}\ln(n) \leq q_n^2 \leq 1\quad \textrm{ with }\quad c=\frac{1}{2}-\frac{1-\eta}{2+\delta}
\] for arbitrarily small $\eta>0$, then the relation
\[
\widetilde{p}_n(x) = \varphi_{(1+s^2_n)I_{2d}}(x)(1+\textnormal{o}_n(1))
\] holds uniformly for $\abs{x}^3=\textnormal{o}^{(d)}_n\PP{\min\LL{n^c,n^{1/2-1/(2+\delta)}}}$, where the implicit constant in the $\textnormal{o}^{(d)}_n$-notation depends on $d$, and $\varphi_\Sigma$ is the density of a $2d$-dimensional zero mean Gaussian vector with covariance matrix $\Sigma$.
\end{lem}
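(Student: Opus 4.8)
The statement is Lemma~3.4 of \cite{DavisMikosh}, applied in our setting to the truncated array $\PP{\widetilde{\xi}_\ell^{(n)}}_{\ell\in\{0,\ldots,n-1\}}$ of~\eqref{eq:defarray}, whose construction and moment bounds are exactly of the type required there; so the plan is to check that the hypotheses transfer verbatim and then invoke it, recording along the way the mechanism that produces the density expansion. The single substantial external ingredient is Einmahl's multivariate extension of the Koml\'os--Major--Tusn\'ady approximation \cite{Einmahl}, which provides a quantitative Gaussian approximation for sums of independent $\R^{2d}$-valued random vectors possessing a finite $(2+\delta)$-th moment.

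First I would split the vector in the statement as $D_n+\Gamma_n$, where
\[
D_n:=2^{1/2}n^{-1/2}\sum_{\ell=0}^{n-1}\widetilde{\xi}_\ell^{(n)}\,v_d(\ell),\qquad
\Gamma_n:=2^{1/2}n^{-1/2}s_n\sum_{\ell=0}^{n-1}N_\ell\,v_d(\ell).
\]
By the orthogonality relations for the distinct Fourier frequencies $w_{i_1},\ldots,w_{i_d}$ one has $\frac2n\sum_{\ell=0}^{n-1}v_d(\ell)v_d(\ell)^{\textnormal{T}}\longrightarrow I_{2d}$; hence $\textnormal{Cov}(D_n)=\PP{1+\textnormal{o}_n(1)}I_{2d}$, using $\textnormal{Var}\PP{\widetilde{\xi}_0^{(n)}}\to 1$ by dominated convergence as the truncation level $n^{1/(2+\delta)}$ tends to infinity, and $\textnormal{Cov}(\Gamma_n)=\PP{1+\textnormal{o}_n(1)}s_n^2 I_{2d}$. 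Therefore $\textnormal{Cov}(D_n+\Gamma_n)=\PP{1+s_n^2}\PP{1+\textnormal{o}_n(1)}I_{2d}$, which identifies $\varphi_{(1+s_n^2)I_{2d}}$ as the target.

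The key step is the density expansion itself. The Gaussian perturbation $s_n N_\ell$ with $n^{-2c}\ln(n)\leq q_n^2\leq 1$ makes the law of $D_n+\Gamma_n$ absolutely continuous and concentrates its characteristic function on frequencies of size at most of order $s_n^{-1}\sqrt{\ln(n)}$; on that frequency range Einmahl's approximation \cite{Einmahl} — applicable because the independent mean-zero summands $\widetilde{\xi}_\ell^{(n)}v_d(\ell)$ have uniformly bounded $(2+\delta)$-th moments, namely $\E{\abs{\widetilde{\xi}_0^{(n)}}^{2+\delta}}\leq C\,\E{\abs{\xi_0}^{2+\delta}}<\infty$ — lets one replace the characteristic function of $D_n$ by that of a Gaussian with covariance $\textnormal{Cov}(D_n)$ at the cost of an error which, after Fourier inversion, contributes only a multiplicative factor $1+\textnormal{o}_n(1)$ uniformly over $\abs{x}^3=\textnormal{o}^{(d)}_n\PP{\min\LL{n^c,n^{1/2-1/(2+\delta)}}}$. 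Here the lower bound $n^{-2c}\ln(n)\leq q_n^2$ and the admissible window for $x$ are precisely calibrated, with $c=\frac12-\frac{1-\eta}{2+\delta}$, so that the approximation error is absorbed by the mollifier. Carrying out the remaining Gaussian inversion explicitly and invoking $\textnormal{Cov}(D_n)+\textnormal{Cov}(\Gamma_n)=\PP{1+s_n^2}\PP{1+\textnormal{o}_n(1)}I_{2d}$, together with the stability of the Gaussian exponent under an $\textnormal{o}_n(1)$-perturbation of the covariance on the stated window for $x$, then yields $\widetilde{p}_n(x)=\varphi_{(1+s_n^2)I_{2d}}(x)\PP{1+\textnormal{o}_n(1)}$.

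I expect the main obstacle to be exactly this uniformity of the local limit theorem over the growing region $\abs{x}^3=\textnormal{o}_n(n^c)$ under only a finite $(2+\delta)$-th moment: one needs the Gaussian approximation not merely in distribution but in the strong, quantitative form that survives passing to densities, which is the whole content of the multivariate refinement in \cite{Einmahl}, and the exponent $c$ together with the smoothing window is tuned so that the residual error is killed by the Gaussian mollifier. A secondary point to verify is that the triangular-array nature of the summands — they depend on $n$ through both the truncation level $n^{1/(2+\delta)}$ and the deterministic vectors $v_d(\ell)$ — is compatible with the hypotheses of \cite{Einmahl}: the uniform moment bound above and the orthogonality-driven convergence of the covariance supply the required nondegeneracy and Lindeberg-type conditions.
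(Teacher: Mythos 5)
The paper does not prove this lemma at all: it is imported verbatim as Lemma~3.4 of \cite{DavisMikosh} and used as a black box, so there is no internal argument to compare yours against. Your outline is consistent with how that lemma is actually established in the cited source — exact trigonometric orthogonality of $v_d(\ell)$ at distinct Fourier frequencies to identify the covariance, Gaussian smoothing by $s_nN_\ell$ to regularize the characteristic function and confine the Fourier inversion to $\abs{t}\lesssim s_n^{-1}\sqrt{\ln n}$, and Einmahl's multivariate refinement \cite{Einmahl} to control the non-Gaussian part on that range — so as a reconstruction of the cited proof it identifies the right mechanism.

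That said, what you have written is a strategy rather than a proof: the entire quantitative content of the lemma is the \emph{uniformity} of the expansion over the growing window $\abs{x}^3=\textnormal{o}_n^{(d)}\PP{\min\LL{n^c,n^{1/2-1/(2+\delta)}}}$ under only a $(2+\delta)$-th moment, and in your write-up this is asserted ("lets one replace\ldots at the cost of an error which\ldots contributes only a multiplicative factor $1+\textnormal{o}_n(1)$") rather than derived; this is precisely the step one cannot wave through, and it is why the authors cite it rather than reprove it. One concrete detail you should also pin down if you intend to make the sketch rigorous: the covariance of the unsmoothed part is $\textnormal{Var}\PP{\widetilde{\xi}_0^{(n)}}I_{2d}$, not $I_{2d}$, and replacing $\textnormal{Var}\PP{\widetilde{\xi}_0^{(n)}}+s_n^2$ by $1+s_n^2$ inside the Gaussian density changes it by a factor $\exp\PP{O\PP{\abs{x}^2\,\E{\xi_0^2\Ind{\abs{\xi_0}>n^{1/(2+\delta)}}}}}$; checking that this is $1+\textnormal{o}_n(1)$ requires comparing the exponent $2c/3$ from the admissible window with the decay rate $n^{-\delta/(2+\delta)}$ of the truncated tail, i.e.\ it depends on the interplay between $\eta$ and $\delta$ and is not automatic from "$\textnormal{Var}\PP{\widetilde{\xi}_0^{(n)}}\to 1$" alone.
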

\begin{proof}[Proof of Lemma~\ref{lem:noexiste}]
Let $x,y\in \mathbb{R}$.
The idea is to apply 
Lemma~\ref{lem:NormalApprox} to the random sequence $\left(\widetilde{\xi}_j^{(n)}+\fs_n N_j\right)_{j\in \mathbb{N}_0}$ with a suitable deterministic sequence $(\fs_n)_{n\in \mathbb{N}}$.
We note that the variance of $\widetilde{\xi}_0^{(n)}$,
$\textnormal{Var}\PP{\widetilde{\xi}_0^{(n)}}$, is bounded by $\mathbb{E}[\xi^2_0]=1$.
Then we define 
\begin{equation}\label{eq:defsn2}
\fs^2_n:= \textnormal{Var}\PP{\widetilde{\xi}_0^{(n)}} n^{-1/2+(1-\eta)/(2+\delta)}
\end{equation}
{for} sufficiently small $\eta>0$.
Since
\begin{equation}\label{eqn:eqn251020211045}
\begin{split}
    & \Pr{ \sigma^{(n,\NN)}_{\min} \leq \beta_n x,\; \sigma^{(n,\NN)}_{\max}\leq \beta_n(\fb_n y + \fa_n)} \\
    &  = \Pr{ \bigcap_{k=1}^{n-1}\LL{\abs{\widetilde{\lambda}^{(n)}_k + \gamma^{(n)}_k}\leq \fb_n y + \fa_n}} - \Pr{\bigcap_{k=1}^{n-1}\LL{x < \abs{\widetilde{\lambda}^{(n)}_k + \gamma^{(n)}_k}\leq \fb_n y + \fa_n}}  \\
        & = \Pr{ \bigcap_{k=1}^{q_n}\LL{\abs{\widetilde{\lambda}^{(n)}_k + \gamma^{(n)}_k}\leq \fb_n y + \fa_n}} - \Pr{\bigcap_{k=1}^{q_n}\LL{x < \abs{\widetilde{\lambda}^{(n)}_k + \gamma^{(n)}_k}\leq \fb_n y + \fa_n}},
\end{split}
\end{equation}
where $q_n:=\lfloor n/2\rfloor$ with $\lfloor \cdot \rfloor$ being the floor function.
Then we define 
\begin{equation}\label{eq:limit2}
J^{(n)}(x,y):=\Pr{\bigcap_{k=1}^{q_n}\LL{x < \abs{\widetilde{\lambda}^{(n)}_k + \gamma^{(n)}_k}\leq \fb_n y + \fa_n}}.
\end{equation}
In what follows we compute the limit of~\eqref{eq:limit2} as $n\to\infty$. 
In fact we prove that
\begin{equation}\label{eq:limit3}
\lim\limits_{n\to \infty}
J^{(n)}(x,y)=\exp\left(-\PP{\frac{x^2}{2} + e^{-y}}\right).
\end{equation}
For convenience, let
\begin{equation}\label{eq:indenev}
A^{(n)}_{k}:=\LL{x < \abs{\widetilde{\lambda}^{(n)}_k + \gamma^{(n)}_k}\leq \fb_n y + \fa_n}^{\textnormal{c}}\quad \textrm{ for all }\quad k\in \{1,\ldots,q_n\}
\end{equation}
and observe that
\begin{equation}\label{eq:unomenos}
1-J^{(n)}_2=1 - \Pr{\bigcap_{k=1}^{q_n}
\left(A^{(n)}_{k}\right)^{\textnormal{c}}
} =\Pr{\bigcup_{k=1}^{q_n}A^{(n)}_{k}}.
\end{equation}
By Lemma~\ref{lem:bonferroni} in Appendix~\ref{ap:tools} for any fixed $\ell\in \mathbb{N}$ we obtain
\begin{equation}\label{eqn:Bonferroniine}
    \sum_{j=1}^{2\ell}(-1)^{j-1} S^{(n)}_j \leq \Pr{\bigcup_{k=1}^{q_n} A^{(n)}_{k}} \leq \sum_{j=1}^{2\ell-1} (-1)^{j-1} S^{(n)}_j,
\end{equation}
where 
\begin{equation}\label{eq:bonfe}
S^{(n)}_j = \sum_{1\leq i_1 <\cdots <i_j\leq q_n} \mathbb{P}\PP{A^{(n)}_{i_1}\cap\cdots\cap A^{(n)}_{i_j}}.
\end{equation}
We claim that for every fixed $d\in \mathbb{N}$ the following limit holds true
\begin{equation}\label{eq:qndlimit}
\lim\limits_{n\to \infty}
\binom{q_n}{d}\Pr{\bigcap_{k=1}^d A^{(n)}_{k} }=  \frac{1}{d!}\PP{\frac{x^2}{2} + e^{-y}}^d,
\end{equation}
where the symbol $\binom{q_n}{d}$ denotes the binomial coefficient.
Indeed, by Lemma~\ref{lem:NormalApprox} we have
\begin{equation}\label{eq:KMTapprox}
\begin{split}
\Pr{\bigcap_{k=1}^d A^{(n)}_{k} }&= \Pr{\bigcap_{k=1}^d \LL{\frac{2x^2}{n} < \abs{\sqrt{\frac{2}{n}}\PP{\widetilde{\lambda}^{(n)}_{i_k} + \gamma^{(n)}_{i_k}}}^2 \leq \frac{2\PP{\fb_n y + \fa_n}^2}{n}}^\textnormal{c}}. \\
    & = (1+\textnormal{o}_n(1))\int_{{B}^{(n)}_d} \varphi_{(1+\fs_n^2)I_{2d}}(u) \mathrm{d}u,
\end{split}
\end{equation}
where  $I_{2d}$ denotes the $2d\times 2d$ identity matrix, 
$\varphi_{(1+\fs_n^2)I_{2d}}$ is the density of a $2d$-dimensional Gaussian vector with zero mean and covariance matrix $(1+\fs_n^2)I_{2d}$, 
$\textnormal{o}_n(1)\to 0$ as $n\to \infty$,
and
\begin{equation}
\begin{split}
{B}^{(n)}_d & :=\left\{\PP{w_1,v_1,\ldots,w_d,v_d} \in \mathbb{R}^{2d}: w_i^2+v_i^2\leq \frac{2x^2}{n}\quad \textrm{ or } \right.\\
&\left. \hspace{5.9cm}
w_i^2+v_i^2 > 2\frac{\PP{\fb_n y + \fa_n}^2}{n}\quad \textrm{ for all }\quad i\in\{1,\ldots,d\}\right\}.
\end{split}
\end{equation} 
By~\eqref{eq:defab} and since $x,y\in \mathbb{R}$ are fixed,
there exists $n_0=n_0(x,y)$ such that 
\[2x^2/n<2{\PP{\fb_n y + \fa_n}^2}/n\quad \textrm{ for all }\quad n\geq n_0.\] 
Hence,~\eqref{eq:KMTapprox} with the help of 
Lemma~\ref{lem:2Ray} in Appendix~\ref{ap:tools} yields
\begin{equation}\label{eq:formula1o}
\Pr{\bigcap_{k=1}^d A^{(n)}_{k} }=  (1+\textnormal{o}_n(1))\PP{1-\exp\PP{-\frac{x^2}{n(1+\fs^2_n)}} + \exp\PP{-\frac{(\fb_n y + \fa_n)^2}{n(1+\fs^2_n)}}}^d.
\end{equation}
By~\eqref{eq:defab}, the Stirling formula (see formula~(1) in \cite{Robbins1955}) and the fact that $\fs_n\to 0$ as $n\to \infty$ we deduce 
\begin{equation}\label{eq:formula1o1}
\lim\limits_{n\to \infty}
\binom{q_n}{d}\PP{1-\exp\PP{-\frac{x^2}{n(1+\fs^2_n)}} + \exp\PP{-\frac{(\fb_n y + \fa_n)^2}{n(1+\fs^2_n)}}}^d = \frac{1}{d!}\PP{\frac{x^2}{2} + e^{-y}}^d.
\end{equation}
As a consequence of~\eqref{eq:formula1o} 
and~\eqref{eq:formula1o1} we obtain~\eqref{eq:qndlimit}.

Now, we prove~\eqref{eq:limit3}.
By~\eqref{eqn:Bonferroniine},~\eqref{eq:bonfe},
~\eqref{eq:qndlimit} 
and the fact that the events $(A^{(n)}_{k})_{k\in \{1,\ldots,q_n\}}$ are independent,
we have for any $\ell\in \mathbb{N}$ 
\begin{equation}
\begin{split}
    \sum_{j=1}^{2\ell}(-1)^{j-1} \frac{1}{j!}\PP{\frac{x^2}{2} + e^{-y}}^j &\leq \liminf\limits_{n\to \infty}\Pr{\bigcup_{k=1}^{q_n} A^{(n)}_{k}}  \\
 &\leq
\limsup\limits_{n\to \infty}\Pr{\bigcup_{k=1}^{q_n} A^{(n)}_{k}}\leq     
     \sum_{j=1}^{2\ell-1} (-1)^{j-1} \frac{1}{j!}\PP{\frac{x^2}{2} + e^{-y}}^j.
\end{split}
\end{equation}
Sending $\ell\to \infty$ in the preceding inequality yields
\begin{equation}
\begin{split}
\lim\limits_{n\to \infty}\Pr{\bigcup_{k=1}^{q_n} A^{(n)}_{k}}=     
     \sum_{j=1}^{\infty} (-1)^{j-1} \frac{1}{j!}\PP{\frac{x^2}{2} + e^{-y}}^j=1-\exp\left(-\PP{\frac{x^2}{2} + e^{-y}}\right).
\end{split}
\end{equation}
The preceding limit with the help of~\eqref{eq:unomenos} implies~\eqref{eq:limit3}.

Finally, by~\eqref{eqn:eqn251020211045},~\eqref{eq:limit2} and~\eqref{eq:limit3} we obtain
\begin{equation*}
\begin{split}
\lim\limits_{n\to \infty}
\Pr{ \sigma^{(n,\NN)}_{\min} \leq \beta_n x,\; \sigma^{(n,\NN)}_{\max}\leq \beta_n(\fb_n y + \fa_n)}
&=\lim\limits_{n\to \infty}\left(J^{(n)}(0,y)-J^{(n)}(x,y)\right)\\
&=\exp\left(-e^{-y}\right)-\exp\left(-\PP{\frac{x^2}{2} + e^{-y}}\right)\\
&=R(x)G(y),
\end{split}
\end{equation*}
where $R$ and $G$ are defined in~\eqref{eq:defRG}.
\end{proof}
\subsection{\textbf{Proof of Theorem~\ref{thm:generalcase}}}\label{sec:prueba}
In the subsection, we stress the fact that 
Theorem~\ref{thm:generalcase} is just a consequence of what
we have already stated up to here.
\begin{proof}[Proof of Theorem~\ref{thm:generalcase}]
Let $x,y\in \mathbb{R}$.
By~\eqref{eq:limitesmooth} in 
Lemma~\ref{lem:lem200520210925}  we have 
\begin{equation}
\lim\limits_{n\to \infty}
\Pr{\widetilde{\sigma}^{(n)}_{\min}\leq x,\;  \widetilde{\sigma}^{(n)}_{\max} \leq \fb_n y + \fa_n}=R(x)G(y),
\end{equation}
where $\widetilde{\sigma}^{(n)}_{\min}$ and $\widetilde{\sigma}^{(n)}_{\max}$ are given 
in~\eqref{eq:tildeminmax} of Lemma~\ref{lem:acotado}.
The preceding limit with the help 
of~\eqref{eqn:eqn200520210851} in Lemma~\ref{lem:acotado} implies
\begin{equation}
\lim\limits_{n\to \infty}
\Pr{\sigma^{(n)}_{\min}\leq  x,\frac{\sigma^{(n)}_{\max}-\fa_n}{\fb_n}\leq y}=R(x)G(y).
\end{equation}
This concludes the proof of Theorem~\ref{thm:generalcase}.
\end{proof}
\appendix
\section{\textbf{Tools}}\label{ap:tools}
The following section contains useful tools that help us to make this article
more fluid. 
The following elementary lemma is crucial in the proof of Proposition~\ref{thm:Gaussiancase}. 
\begin{lem}\label{lem:080720201357}
Let $n\geq 2$ be a given integer number and
let $Y_1,\ldots,Y_n$ be independent random variables. Define $m^{(n)}:=\min\LL{Y_1,\ldots,Y_n}$ and $M^{(n)}:=\max\LL{Y_1,\ldots,Y_n}$. Then 
\[\Pr{m^{(n)}\leq s,M^{(n)}\leq t} = \prod_{j=1}^n \Pr{Y_j\leq t} - \prod_{j=1}^n \Pr{s<Y_j\leq t}
\quad \textrm{ for all }\quad s,t\in \R.
\]
\end{lem}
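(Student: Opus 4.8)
The plan is to express the joint event $\{m^{(n)}\le s,\ M^{(n)}\le t\}$ as a set difference of two events to which independence applies directly. First I would write
\[
\LL{M^{(n)}\leq t} = \LL{m^{(n)}\leq s,\ M^{(n)}\leq t}\ \cup\ \LL{m^{(n)}> s,\ M^{(n)}\leq t},
\]
and note that this is a disjoint union, since the two events differ on whether $m^{(n)}\le s$ or $m^{(n)}>s$. Hence
\[
\Pr{m^{(n)}\leq s,\ M^{(n)}\leq t} = \Pr{M^{(n)}\leq t} - \Pr{m^{(n)}> s,\ M^{(n)}\leq t}.
\]

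Next I would rewrite each of the two terms on the right in product form using independence. For the first term, $\{M^{(n)}\le t\}=\bigcap_{j=1}^n\{Y_j\le t\}$, so by independence $\Pr{M^{(n)}\le t}=\prod_{j=1}^n\Pr{Y_j\le t}$. For the second term, $\{m^{(n)}>s,\ M^{(n)}\le t\}=\bigcap_{j=1}^n\{s<Y_j\le t\}$, and independence gives $\Pr{m^{(n)}>s,\ M^{(n)}\le t}=\prod_{j=1}^n\Pr{s<Y_j\le t}$. Combining these two identities with the displayed set-difference formula yields exactly the claimed equality, valid for all $s,t\in\R$ (note that when $s\ge t$ the factor $\Pr{s<Y_j\le t}$ vanishes, consistent with the left-hand side equaling $\Pr{M^{(n)}\le t}$ in that degenerate range, and when $s<t$ everything is as above).

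There is essentially no obstacle here: the only thing to be a little careful about is the disjointness of the union in the first step and the translation of the ``minimum exceeds $s$'' event into an intersection of one-dimensional events, both of which are immediate. The statement does not require continuity of the $Y_j$, so I would phrase the argument purely in terms of the events $\{Y_j\le t\}$ and $\{s<Y_j\le t\}$ to keep it valid for arbitrary distributions.
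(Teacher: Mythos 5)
Your proof is correct. The paper omits the proof of this lemma, stating only that it is ``straightforward,'' and your argument --- decomposing $\{M^{(n)}\le t\}$ into the disjoint union over $\{m^{(n)}\le s\}$ and $\{m^{(n)}>s\}$, then rewriting each piece as an intersection of one-dimensional events and factorizing by independence --- is precisely the standard short argument the authors surely have in mind, and your remark about the degenerate range $s\ge t$ is a nice sanity check.
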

Since the proof of Lemma~\ref{lem:080720201357} is straightforward, we omit it.

\begin{lem}[Bonferroni's inequality]\label{lem:bonferroni}
Let $\PP{\Omega,\mathcal{F},\mathbb{P}}$ be a probability space.
Let $A_1,\ldots,A_n$ be events. 
Then for every  $\ell\in \mathbb{N}$,
\begin{equation}\label{eqn:Bonferroni}
    \sum_{j=1}^{2\ell}(-1)^{j-1} S_j \leq \mathbb{P}\PP{A_1\cup \cdots\cup A_n} \leq \sum_{j=1}^{2\ell-1} (-1)^{j-1} S_j,
\end{equation}
where 
\[
S_j= \sum_{1\leq i_1 <\cdots <i_j\leq n} \mathbb{P}\PP{A_{i_1}\cap\cdots\cap A_{i_j}}.
\]
\end{lem}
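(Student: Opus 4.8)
The plan is to reduce the statement to a pointwise combinatorial inequality and then integrate. Fix $\omega\in\Omega$ and let $m=m(\omega):=\#\{i\in\{1,\ldots,n\}:\omega\in A_i\}$ be the number of events containing $\omega$. The bridge between the probabilistic and combinatorial pictures is the identity
\[
\binom{m(\omega)}{j}=\sum_{1\leq i_1<\cdots<i_j\leq n}\Ind{\omega\in A_{i_1}\cap\cdots\cap A_{i_j}},\qquad 1\leq j\leq n,
\]
which holds because a $j$-element index set contributes $1$ exactly when all $j$ of the corresponding events contain $\omega$, and there are precisely $\binom{m}{j}$ such index sets. Since these are bounded random variables, taking expectations is legitimate and, using also $\Ind{\omega\in A_1\cup\cdots\cup A_n}=\Ind{m(\omega)\geq 1}$, it gives $\E{\binom{m(\omega)}{j}}=S_j$ and $\E{\Ind{m(\omega)\geq 1}}=\mathbb{P}(A_1\cup\cdots\cup A_n)$. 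Hence, by monotonicity of the expectation, it is enough to establish the pointwise bounds
\[
\sum_{j=1}^{2\ell}(-1)^{j-1}\binom{m}{j}\ \leq\ \Ind{m\geq 1}\ \leq\ \sum_{j=1}^{2\ell-1}(-1)^{j-1}\binom{m}{j}\qquad\text{for every integer }m\geq 0,\ \ell\in\mathbb{N}.
\]

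To prove these I would first dispose of the case $m=0$: every binomial coefficient $\binom{m}{j}$ with $j\geq 1$ vanishes, so all three quantities equal $0=\Ind{m\geq 1}$ and there is nothing to check. For $m\geq 1$ the idea is to use Pascal's rule $\binom{m}{j}=\binom{m-1}{j}+\binom{m-1}{j-1}$ to make the alternating sum telescope:
\[
\sum_{j=0}^{k}(-1)^{j}\binom{m}{j}=\sum_{j=0}^{k}(-1)^{j}\binom{m-1}{j}-\sum_{j=0}^{k-1}(-1)^{j}\binom{m-1}{j}=(-1)^{k}\binom{m-1}{k},
\]
valid for every $k\geq 0$ with the usual convention $\binom{m-1}{k}=0$ for $k>m-1$. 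Separating the $j=0$ term, this rearranges to
\[
\sum_{j=1}^{k}(-1)^{j-1}\binom{m}{j}=1-(-1)^{k}\binom{m-1}{k},
\]
so the truncated alternating sum equals $1-\binom{m-1}{k}\leq 1$ when $k$ is even and $1+\binom{m-1}{k}\geq 1$ when $k$ is odd. Since $\Ind{m\geq 1}=1$ in this case, this is exactly the required two-sided bound; integrating it over $\omega$ against $\mathbb{P}$ and using the expectation identities above yields \eqref{eqn:Bonferroni}.

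The argument is entirely elementary, so there is no genuine obstacle. The only points that demand a little care are the bookkeeping in the identity relating $\binom{m(\omega)}{j}$ to the sum of indicators of the $j$-fold intersections, and checking that the degenerate cases $m=0$ and $k>m$ do not disturb the alternation of signs in the telescoped sum. Once the pointwise inequalities are in hand, the passage to probabilities is pure linearity and monotonicity of expectation.
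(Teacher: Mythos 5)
Your argument is correct and complete: the reduction via $\binom{m(\omega)}{j}=\sum_{i_1<\cdots<i_j}\Ind{\omega\in A_{i_1}\cap\cdots\cap A_{i_j}}$ gives $\E{\binom{m}{j}}=S_j$, and the telescoped identity $\sum_{j=1}^{k}(-1)^{j-1}\binom{m}{j}=1-(-1)^k\binom{m-1}{k}$ yields the pointwise two-sided bound, after which taking expectations finishes the proof. The paper does not actually supply a proof of this lemma but only cites Section~1.1 of Lin and Bai; the proof there is this same standard indicator-function argument, so you have in effect reconstructed the cited reference's proof.
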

The proof of Lemma~\ref{lem:bonferroni} is given in Section~1.1 ``Inclusion-exclusion Formula" of \cite{LinBai}.
\begin{lem}[Continuity]\label{lem:doublelimit}
Let $(\Omega,\mathcal{F}, \mathbb{P})$ be a probability space.
Let $(X_n)_{n\in \mathbb{N}}$ be a sequence of random variables defined on $\Omega$ and taking values in $\mathbb{R}$.
Assume that $X_n$ converges in distribution to a random variable $X$,  as 
$n\to \infty$. Let $x$ be a continuity point of the distribution function $F_X$ of the random variable $X$  and let
$(a_n(x))_{n\in \mathbb{N}}$ be a deterministic sequence of real numbers such that $a_n(x)\to x$ as $n\to \infty$. Then
\begin{equation}\label{eq:level0}
\lim\limits_{n\to \infty}\mathbb{P}(X_n\leq a_n(x))= F_X(x).
\end{equation}
In addition, if $F_X$ is a continuous function then
\begin{enumerate}
\item[(i)] for any deterministic sequence $(a_n)_{n\in \mathbb{N}}$ such that $a_n\to 0$ as $n\to \infty$ it follows that
\begin{equation}
\lim\limits_{n\to \infty}\mathbb{P}(|X_n|\leq a_n)= 0.
\end{equation}
\item[(ii)]  for any deterministic sequence $(a_n)_{n\in \mathbb{N}}$ satisfying $a_n\to \infty$ as $n\to \infty$ it follows that
\begin{equation}
\lim\limits_{n\to \infty}\mathbb{P}(|X_n|> a_n)= 0.
\end{equation}
\end{enumerate}
\end{lem}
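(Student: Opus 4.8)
The plan is to deduce everything from the defining property of convergence in distribution, namely that $F_{X_n}(t)\to F_X(t)$ at every continuity point $t$ of $F_X$, together with the fact that the set of continuity points of a distribution function is dense in $\R$.

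First I would prove~\eqref{eq:level0}. Fix $\varepsilon>0$. Since $x$ is a continuity point of $F_X$ and $F_X$ has at most countably many discontinuities, I can choose $\eta>0$ so that both $x-\eta$ and $x+\eta$ are continuity points of $F_X$ and, simultaneously, $F_X(x+\eta)<F_X(x)+\varepsilon$ and $F_X(x-\eta)>F_X(x)-\varepsilon$; the former uses density of the continuity points, the latter uses continuity of $F_X$ at $x$. Because $a_n(x)\to x$, for all $n$ large enough one has $x-\eta<a_n(x)<x+\eta$, hence
\[
F_{X_n}(x-\eta)=\Pr{X_n\leq x-\eta}\leq \Pr{X_n\leq a_n(x)}\leq \Pr{X_n\leq x+\eta}=F_{X_n}(x+\eta).
\]
Letting $n\to\infty$ and using $F_{X_n}(x\pm\eta)\to F_X(x\pm\eta)$ gives
\[
F_X(x)-\varepsilon< F_X(x-\eta)\leq \liminf_{n\to\infty}\Pr{X_n\leq a_n(x)}\leq \limsup_{n\to\infty}\Pr{X_n\leq a_n(x)}\leq F_X(x+\eta)<F_X(x)+\varepsilon.
\]
Since $\varepsilon>0$ is arbitrary, this yields~\eqref{eq:level0}.

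For item~(i), assume in addition that $F_X$ is continuous, so that every real number is a continuity point of $F_X$. Fix $\varepsilon>0$; for $n$ large we have $|a_n|\leq\varepsilon$, so (the bound being trivial when $a_n<0$) $\LL{|X_n|\leq a_n}\subseteq\LL{-2\varepsilon<X_n\leq\varepsilon}$ and therefore $\Pr{|X_n|\leq a_n}\leq F_{X_n}(\varepsilon)-F_{X_n}(-2\varepsilon)$. Letting $n\to\infty$ and then $\varepsilon\downarrow 0$, using continuity of $F_X$ at $0$, gives $\limsup_{n\to\infty}\Pr{|X_n|\leq a_n}\leq F_X(0)-F_X(0)=0$, which is~(i). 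For item~(ii), fix $M>0$; for $n$ large we have $a_n\geq M$, so $\Pr{|X_n|>a_n}\leq \Pr{X_n>M}+\Pr{X_n<-M}\leq \big(1-F_{X_n}(M)\big)+F_{X_n}(-M)$. Letting $n\to\infty$ gives $\limsup_{n\to\infty}\Pr{|X_n|>a_n}\leq \big(1-F_X(M)\big)+F_X(-M)$, and letting $M\to\infty$ forces the right-hand side to $0$, which is~(ii).

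Since every statement reduces to monotonicity of distribution functions plus a single passage to the limit at continuity points, there is no real obstacle; the only point requiring care is purely bookkeeping — one must evaluate $F_X$ only at continuity points and must absorb possible atoms of $F_{X_n}$ at the endpoints, which is exactly why the events $\LL{|X_n|\leq a_n}$ and $\LL{|X_n|>a_n}$ are enlarged to the half-open set $\LL{-2\varepsilon<X_n\leq\varepsilon}$ and to $\LL{X_n>M}\cup\LL{X_n<-M}$ respectively, rather than working directly with the closed interval $[-a_n,a_n]$.
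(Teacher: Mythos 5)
Your proof is correct, and at a couple of points it takes a genuinely different (and slightly more self-contained) route than the paper. For~\eqref{eq:level0} the paper first bounds $\Pr{X_n\leq a_n(x)}$ between $\Pr{X_n< x-\epsilon}$ and $\Pr{X_n\leq x+\epsilon}$ and then appeals to the Portmanteau Theorem for the open and closed sets involved before letting $\epsilon\downarrow 0$; you instead use that the continuity points of $F_X$ are dense to pick $\eta$ so that $x\pm\eta$ are themselves continuity points, after which a single passage to the limit in the defining property of weak convergence suffices, with no appeal to Portmanteau. For items~(i) and~(ii) the paper first invokes the Continuous Mapping Theorem to upgrade $X_n\Rightarrow X$ to $|X_n|\Rightarrow |X|$ and then reuses~\eqref{eq:level0} (plus a monotonicity argument) with the law of $|X|$; you bypass both steps by sandwiching the events $\LL{|X_n|\leq a_n}$ and $\LL{|X_n|>a_n}$ directly between sets described by $F_{X_n}$ and then taking $n\to\infty$ followed by $\varepsilon\downarrow 0$ (resp.\ $M\uparrow\infty$), using only that every point is a continuity point of $F_X$. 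The trade-off is minor: the paper's version is shorter because it delegates the heavy lifting to cited black-box theorems, while yours needs only the raw definition of convergence in distribution and elementary set inclusions; the care you take in replacing $[-a_n,a_n]$ by the half-open interval $(-2\varepsilon,\varepsilon]$ to avoid endpoint atoms of $F_{X_n}$ is exactly the right bookkeeping and mirrors what Portmanteau/CMT would otherwise absorb.
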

\begin{proof}
We start with the proof of~\eqref{eq:level0}.
Let $x$ be a continuity point of $F_X$ and take $\epsilon>0$.
Then there exists $n_\epsilon:=n_\epsilon(x)$ such that $x-\epsilon<a_n(x)<x+\epsilon$ for all $n\geq n_\epsilon$.
By monotonicity we have  
\begin{equation} 
\begin{split}
\liminf\limits_{n\to \infty}
\mathbb{P}\left(X_n< x-\epsilon\right)&\leq 
\liminf\limits_{n\to \infty}
\mathbb{P}\left(X_n\leq a_n(x)\right)\\
&
\leq 
\limsup\limits_{n\to \infty}
\mathbb{P}\left(X_n\leq a_n(x)\right)
\leq 
\limsup\limits_{n\to \infty}
\mathbb{P}\left(X_n\leq  x+\epsilon\right).
\end{split}
\end{equation}
Hence, the Portmanteau Theorem (see Theorem~13.16 in \cite{Klenke}) implies for any $\epsilon>0$
\begin{equation} 
\begin{split}
F_X(x-\epsilon)\leq 
\liminf\limits_{n\to \infty}
\mathbb{P}\left(X_n\leq a_n(x)\right)
\leq 
\limsup\limits_{n\to \infty}
\mathbb{P}\left(X_n\leq a_n(x)\right)
\leq 
F_X(x+\epsilon).
\end{split}
\end{equation}
Since $x$ is a continuity point of $F_X$, sending $\epsilon\to 0$ we deduce~\eqref{eq:level0}. 

We continue with the proof of item (i) and item (ii).
By the Continuous Mapping Theorem we have 
$|X_n|\to |X|$ in distribution, as $n\to \infty$. 
On the one hand,~\eqref{eq:level0} yields
\begin{equation}
\lim\limits_{n\to \infty}\mathbb{P}(|X_n|\leq a_n)=\mathbb{P}(|X|\leq 0)=\mathbb{P}(|X|=0)=0,
\end{equation}
which finishes the proof of item (i).

On the other hand, let $m>0$ be arbitrary. Then there exists $n_{m}\in \mathbb{N}$ such that $a_n>m$ for all $n\geq n_m$. Hence,
\begin{equation}
\limsup\limits_{n\to \infty}\mathbb{P}(|X_n|>a_n)=\limsup\limits_{n\to \infty}\mathbb{P}(|X_n|> m)=\mathbb{P}(|X|> m),
\end{equation}
which implies item (ii) sending $m\uparrow \infty$.
\end{proof}
\begin{lem}[Fr\'echet distribution as  the inverse of a scaled Rayleigh distribution]\label{lem:1Ray}
Let $X$ be a real random variable with 
the Rayleigh distribution  
$R(x)=(1-\exp\PP{-x^2/2})\Ind{x\geq 0}$.
Let $Y$ be a real random variable with the Fr\'echet distribution 
$F(y)=\exp(-y^{-2})\Ind{y>0}$. Then
$
\sqrt{2}/X\stackrel{d}=Y
$, where the symbol $\stackrel{d}=$ denotes equality in distribution.
\end{lem}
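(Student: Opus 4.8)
The plan is to compute the cumulative distribution function of $\sqrt{2}/X$ directly and verify that it coincides with $F$. First I would record the trivial structural facts: since $R(0)=0$, the random variable $X$ is strictly positive with probability one, so $\sqrt{2}/X$ is a well-defined positive random variable. In particular $\Pr{\sqrt{2}/X\leq y}=0=F(y)$ for every $y\leq 0$, which disposes of the case $y\leq 0$ and lets us restrict to $y>0$ in what follows.

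For $y>0$, I would use that $t\mapsto \sqrt{2}/t$ is strictly decreasing on $(0,\infty)$ together with the continuity of the Rayleigh distribution function $R$ to write
\[
\Pr{\frac{\sqrt{2}}{X}\leq y}=\Pr{X\geq \frac{\sqrt{2}}{y}}=1-\Pr{X<\frac{\sqrt{2}}{y}}=1-R\PP{\frac{\sqrt{2}}{y}}.
\]
Substituting the explicit form $R(x)=1-\exp(-x^2/2)$ then gives
\[
1-R\PP{\frac{\sqrt{2}}{y}}=\exp\PP{-\frac{1}{2}\PP{\frac{\sqrt{2}}{y}}^2}=\exp\PP{-\frac{1}{y^2}}=F(y).
\]
Hence the distribution functions of $\sqrt{2}/X$ and of $Y$ agree on all of $\mathbb{R}$, which is precisely the assertion $\sqrt{2}/X\stackrel{d}{=}Y$.

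There is essentially no obstacle here; the only points requiring a word of care are that $X>0$ almost surely (so that the reciprocal is meaningful and the event $\{\sqrt{2}/X\leq y\}$ is empty for $y\leq 0$) and that $R$ is continuous (so that $\Pr{X<\sqrt{2}/y}$ equals $R(\sqrt{2}/y)$ rather than a one-sided limit), both of which are immediate from the displayed formula for $R$. As an alternative one could invoke the elementary fact recalled earlier in the manuscript that the square root of an $\mathsf{E}_1$-distributed variable is Rayleigh, so that $X^2$ is exponential with parameter $1/2$ and $(\sqrt{2}/X)^2=2/X^2$ satisfies $\Pr{2/X^2\leq s}=\Pr{X^2\geq 2/s}=e^{-1/s}$, yielding the same conclusion; but the direct computation above is the cleanest route and is the one I would write out.
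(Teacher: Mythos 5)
Your proof is correct and follows the same route as the paper: compute $\Pr{\sqrt{2}/X\leq y}=\Pr{X\geq\sqrt{2}/y}$ for $y>0$ and substitute the explicit Rayleigh distribution function. You simply spell out a few points the paper leaves implicit (positivity of $X$, continuity of $R$, the trivial case $y\leq 0$), which is fine but not a different argument.
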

\begin{proof}
Let $y>0$ and note that
\begin{equation}
\mathbb{P}\left(\sqrt{2}/X\leq y\right)=
\mathbb{P}\left(X\geq \sqrt{2}/y\right)=
\exp\PP{-y^{-2}}=F(y).
\end{equation}
The preceding equality concludes the statement.
\end{proof}
\begin{lem}[Exponential distribution as a sum of square independent Gaussian distributions]\label{lem:2Ray}
Let $X_1$ and $X_2$ be two independent random variables with Gaussian distribution with zero mean and variance $\sigma^2>0$.
Then
$X^2_1+X^2_2$ has Exponential distribution with parameter $1/(2\sigma^2)$, that is, 
\[
\mathbb{P}\left(X^2_1+X^2_2\leq x\right)=\left(1-\exp\left(-\frac{x}{2\sigma^2}\right)\right)\Ind{x\geq 0}.
\]
\end{lem}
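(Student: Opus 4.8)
The plan is to compute the cumulative distribution function of $X_1^2+X_2^2$ directly, exploiting the rotational invariance of the bivariate Gaussian law. Since $X_1$ and $X_2$ are independent with common density $t\mapsto (2\pi\sigma^2)^{-1/2}\exp(-t^2/(2\sigma^2))$, the random vector $(X_1,X_2)$ has joint density $(2\pi\sigma^2)^{-1}\exp(-(u^2+v^2)/(2\sigma^2))$ on $\R^2$. For $x<0$ the event $\LL{X_1^2+X_2^2\le x}$ is empty, so $\Pr{X_1^2+X_2^2\le x}=0$, which matches the indicator on the right-hand side. For $x\ge 0$, I would write $\Pr{X_1^2+X_2^2\le x}$ as the integral of this joint density over the closed disc of radius $\sqrt{x}$ centred at the origin.

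The key step is to pass to polar coordinates $u=r\cos\theta$, $v=r\sin\theta$, whose Jacobian is $r$. The integrand depends only on $r$, so the angular integration over $[0,2\pi)$ produces a factor $2\pi$ that cancels the normalizing constant $(2\pi\sigma^2)^{-1}$, leaving $\int_0^{\sqrt{x}} \sigma^{-2} r\exp(-r^2/(2\sigma^2))\,\mathrm{d}r$. The substitution $t=r^2/(2\sigma^2)$, with $\mathrm{d}t=\sigma^{-2}r\,\mathrm{d}r$, turns this into $\int_0^{x/(2\sigma^2)} e^{-t}\,\mathrm{d}t = 1-\exp(-x/(2\sigma^2))$. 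This is exactly the stated cumulative distribution function of the Exponential law with parameter $1/(2\sigma^2)$, so the two distributions coincide.

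An alternative, essentially equivalent route is to reduce to the standard case by scaling: $X_i/\sigma$ is standard Gaussian, hence $(X_1/\sigma)^2+(X_2/\sigma)^2$ is a chi-square variable with two degrees of freedom, which is Exponential with parameter $1/2$; multiplying by $\sigma^2$ then rescales the parameter to $1/(2\sigma^2)$. I would nonetheless present the polar-coordinates computation as the primary argument, since it is fully self-contained and does not invoke the chi-square/Exponential identification as a black box.

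There is no genuine obstacle here: the lemma is elementary. The only points requiring a little care are treating the degenerate range $x<0$ separately and fixing the parametrization convention (an Exponential law with parameter $\lambda$ has survival function $e^{-\lambda x}$ for $x\ge 0$), which pins down $\lambda=1/(2\sigma^2)$; the hypothesis $\sigma^2>0$ guarantees that this parameter is finite and strictly positive.
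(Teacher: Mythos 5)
Your proof is correct. Your primary argument---direct integration in polar coordinates over the disc of radius $\sqrt{x}$, followed by the substitution $t = r^2/(2\sigma^2)$---is a genuinely different route from the paper's, which is precisely your ``alternative'': it rescales $X_i/\sigma$ to standard Gaussians, identifies $X_1^2/\sigma^2 + X_2^2/\sigma^2$ as a chi-square variable with two degrees of freedom, and invokes the known fact that this law is Exponential with parameter $1/2$. The paper's argument is shorter but treats the chi-square/Exponential identification as a black box; your polar-coordinates computation is slightly longer but fully self-contained and re-derives that identification from scratch, which is arguably more instructive for an appendix lemma. Both handle the range $x<0$ trivially and both pin down the same parametrization convention, so the two proofs are interchangeable.
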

\begin{proof}
Since the random variables $X_j/\sigma$, $j=1,2$ have standard Gaussian distributions, the random variables $X^2_j/\sigma^2$, $j=1,2$ posses  Chi-square distribution with one degree of freedom.
Due to the independence assumption, we have that $X^2_1/\sigma^2+X^2_2/\sigma^2$ has Chi-square distribution with two degrees of freedom. That is, for any $x\geq 0$, we have 
$
\mathbb{P}\left(X^2_1/\sigma^2+X^2_2/\sigma^2\leq x\right)=1-e^{-x/2}$.
The preceding relation yields
$
\mathbb{P}\left(X^2_1+X^2_2\leq x\right)=1-\exp{\left(-\frac{x}{2\sigma^2}\right)}$
for any $x\geq 0$.
\end{proof}

\section*{\textbf{Acknowledgments}}
G. Barrera would like to express
his gratitude to University of Helsinki, Department of Mathematics and Statistics, for all the facilities used along the realization of this work.
{The authors are grateful to the reviewers for the thorough examination of the paper, which has lead to a significant improvement.}

\section*{Declarations}
\subsection*{Funding} 
The research of G. Barrera has been supported by the Academy of Finland, via 
the Matter and Materials Profi4 University Profiling Action, 
an Academy project (project No. 339228)
and the Finnish Centre of Excellence in Randomness and STructures (project No. 346306).
\subsection*{Availability of data and material}
Data sharing not applicable to this article as no datasets were generated or analyzed during the current study.
\subsection*{Conflict of interests} The authors declare that they have no conflict of interest.
\subsection*{Authors' contributions}
Both authors have contributed equally to the paper.

\bibliographystyle{amsplain}

\end{document}